\theoremstyle{thmstyleone}%
\newtheorem{theorem}{Theorem}%  meant for continuous numbers
\newtheorem{proposition}[theorem]{Proposition}% 
\newtheorem{lemma}{Lemma}
\theoremstyle{thmstyletwo}%
\newtheorem{example}{Example}%
\newtheorem{remark}{Remark}%
\theoremstyle{thmstylethree}%
\begin{document}

\title[The exact projective penalty method]{The Exact Projective Penalty Method for Constrained Optimization}

%%=============================================================%%
%% Prefix	-> \pfx{Dr}
%% GivenName	-> \fnm{Joergen W.}
%% Particle	-> \spfx{van der} -> surname prefix
%% FamilyName	-> \sur{Ploeg}
%% Suffix	-> \sfx{IV}
%% NatureName	-> \tanm{Poet Laureate} -> Title after name
%% Degrees	-> \dgr{MSc, PhD}
%% \author*[1,2]{\pfx{Dr} \fnm{Joergen W.} \spfx{van der} \sur{Ploeg} \sfx{IV} \tanm{Poet Laureate} 
%%                 \dgr{MSc, PhD}}\email{iauthor@gmail.com}
%%=============================================================%%

\author*[1,2]{\fnm{Vladimir} \sur{Norkin}}\email{vladimir.norkin@gmail.com}

\affil*[1]{\orgdiv{V.M.Glushkov Institute of Cybernetics}, \orgname{National Academy of Sciences of Ukraine}, \orgaddress{\street{Glushkov avenue 40}, \city{Kyiv}, \postcode{03178}, \country{Ukraine}}}

\affil[2]{\orgdiv{Faculty of Applied Mathematics}, \orgname{National Technical University ''Igor Sikorsky Kyiv Polytechnic Institute''}, \orgaddress{\street{Polytehnichna 14}, \city{Kyiv}, \postcode{03056}, \country{Ukraine}}}

%%==================================%%
%% sample for unstructured abstract %%
%%==================================%%

\abstract{A new exact projective penalty method is proposed for the equivalent reduction of constrained optimization problems to nonsmooth unconstrained ones. In the method, the original objective function is extended to infeasible points by summing its value at the projection of an infeasible point on the feasible set with the distance to the projection. Beside Euclidean projections, also a pointed projection in the direction of some fixed internal feasible point can be used. The equivalence means that local and global minimums of the problems coincide. Nonconvex sets with multivalued Euclidean projections are admitted, and the objective function may be lower semicontinuous. The particular case of convex problems is included. The obtained unconstrained or box constrained problem is solved by a version of the branch and bound method combined with local optimization. In principle, any local optimizer can be used within the branch and bound scheme but in numerical experiments sequential quadratic programming method was successfully used. So the proposed exact penalty method does not assume the existence of the objective function outside the allowable area and does not require the selection of the penalty coefficient.      
}

\keywords{Nonconvex constrained optimization, nonsmooth optimization, exact penalty function, projection operation, branch and bound method.}

%%\pacs[JEL Classification]{D8, H51}

%%\pacs[MSC Classification]{35A01, 65L10, 65L12, 65L20, 65L70}

\maketitle

\section{Introduction}\label{sec1}

The classical approach to the exact reduction of a constrained optimization problem to an unconstrained one consists in adding to the objective function some nonsmooth penalty term for the violation of constraints. The problem in this method consists in selecting the correct penalty scale. 

Convex nonsmooth exact penalty functions were first introduced in \cite{Eremin_1966, Eremin_1967, Zangwill_1967}, and have been studied, for example, in \cite{Bertsekas_1975, Boukari_Fiacco_1995, Burke_1991, Demyanov_2005, Di_Pillo_1994, Evtushenko_Zhadan_1990, Han_Mangasarian_1979}, and many others works. Recent advances in the exact penalty function method, applications, and references can be found in   \cite{Dolgopolik_Fominyh_2019, Dolgopolik_2020, Dolgopolik_2021, Laptin_Bardadym_2019, Zaslavski_2010, Zhou_Lange_2015}. In this approach, the problem lies in the correct choice of the penalty parameter $M$. Selection of  $M$  too large makes the unconstrained problem ill-conditioned. A different approach to transformation of constrained problem into unconstrained ones by exact discontinuous penalty functions was considered in \cite{Batukhtin_1993, Knopov_Norkin_2022}. 

In the present paper, we propose a new exact projective penalty method of equivalent reduction of constrained optimization problems to unconstrained ones. The equivalence means that local and global minimums of the problems and the corresponding objective function values at the minimums coincide. In the proposed method, the original objective function is extended to infeasible points by summing its value at the projection of an infeasible point on the feasible set with the distance to the projection. Nonconvex feasible sets with multivalued projections are admitted, and the objective function may be lower semicontinuous. The special case of minimization of a Lipschitz function on a convex constraint set is included.
In the latter case, beside Euclidean projection we propose to use
a centralized projection in the direction of some fixed feasible point.  
So the method does not assume the existence of the objective function outside the allowable area and does not require the selection of the penalty coefficient. The proposed exact projective penalty function, in particular, extends a constrained  Lipschitz function defined on a convex set to the infeasible points with preserving Lipschitz property. In this regard, the proposed penalty function is related to the classical Kirgszbraun problem \cite{Kirszbraun_1934} on the possibility to extend a constrained Lipschitz function (mapping) to the whole space with preserving the Lipschitz property and Lipschitz constant. We show that the Kirgszbraun’s extended function preserves only global minima of the original constrained minimization problem.

The exact projective penalty function method was conceptually introduced and tested in \cite{Norkin_2020} (and later studied in \cite{Galvan_2021, Norkin_2022}) and was motivated by the application of the smoothing method to constrained global optimization to avoid irregularities at bounders of the feasible set. Here we validate it for general convex and nonconvex constrained optimization problems. 

Compared to \cite{Galvan_2021, Norkin_2020}, where constraints are assumed convex, the contribution of the present paper includes:

Extension and validation of the exact projective  penalty method to a general nonconvex case, i.e. to problems with a nonconvex constraint set and a lower semicontinuous objective function;

Validation of the variant of the method with non Euclidean projection operation, which may be computationally much cheaper than the Euclidean one and may be applied effectively to some nonconvex problems;

Application of the method to constrained stochastic programming problems;

Illustration of the method with non-Euclidean projection on a number of 
small non-linearly constrained problems and on large linearly constrained ones;

Establishing a relation of the proposed exact projective penalty function with Kirszbraun’s theorem \cite{Kirszbraun_1934, McShane_1934} on the possibility to extend a constrained Lipschitz function to the whole space with preserving Lipschitz property.

The paper proceeds as follows.
In Section \ref{general} we validate a general exact projective penalty method. Section \ref{convex}
considers a convex case. In Section \ref{non-Euclid} we study a variant of the method with non-Euclidean projection.
In section \ref{stoch} we apply the method to stochastic programming problems.
Section \ref{Kirszbraun} establishes a relation of the method to the Kirszbraun problem. 
Section \ref{numerical} presents algorithmic implementation and results of numerical experiments.
And finally, Section \ref{conclusions} is devoted to discussion of results and conclusions.

\section{The exact projective penalty function method}
\label{general}
Let it be necessary to solve a constrained optimization problem:
\begin{equation}
\label{primary_problem_1}
f(x)\to \underset{x\in C\subseteq {{\mathbb{R}}^{n}}}{\mathop{\min }},
\end{equation}
where $f(x)$ is a lower semicontinuous (lsc) function  defined on a closed set $C\subseteq {{\mathbb{R}}^{n}}$; ${{\mathbb{R}}^{n}}$ is $n$-dimensional Euclidian space with norm $\left\| \cdot  \right\|$; for $x,y\in {{\mathbb{R}}^{n}}$ define $d(x,y)=\left\| x-y \right\|$ and the distance ${{d}_{C}}(x)$ from $x$ to $C$ as ${{d}_{C}}(x)={{\min }_{y\in C}}d(x,y)$. For example, the set $C$ may be given by some other lower semicontinuous function $g(x)$, $C=\left\{ x\in {{\mathbb{R}}^{n}}:\,\,g(x)\le 0 \right\}$.

There are several ways to reduce constrained problem (\ref{primary_problem_1}) to an equivalent nonsmooth unconstrained one.  

For example, if 
$C=\left\{ x\in\mathbb{R}^n \mid g_j(x)\le 0,\,j=1,...,J\,;\,\,h_k(x)=0,\,k=1,...,K \right\}$, 
then in the exact penalty function method the Lipschitz function $f(x)$ is replaced by 
$$F(x):=f\left( x \right)+M\left( \sum\nolimits_{j}{\max \left\{ 0,{{g}_{j}}(x) \right\}}+
\sum\nolimits_{k}\mid h_k (x) \mid \right)$$  
or by 
$$F(x):=f\left( x \right)+M{{\inf }_{y\in C}}\left\| y-x \right\|.$$
with a sufficiently large penalty parameter $M$ and then one considers the problem of unconditional optimization of $F(x)$ (see, e.g. \cite[Proposition 2.4.3]{Clarke_1990}, \cite[Theorem 18.2]{MGN_1987}, \cite[Proposition 9.68]{Rockafellar_Wets_1998}). Note that here it is assumed that functions $f,{{g}_{j}},{{h}_{k}}$ are defined over the whole space ${{\mathbb{R}}^{n}}$.

Remark that there may be exact discontinuous penalty functions \cite{Batukhtin_1993, Knopov_Norkin_2022}.

Let $f:X\to {{\mathbb{R}}^{1}}$ be a lower semicontinuous function on a convex closed set $X\subseteq\mathbb{R}^n$;
$$D=\left\{ x\in X\mid{{g}_{j}}(x)\le 0,\,j=1,...,J\,;\,\,{{h}_{k}}(x)=0,\,k=1,...,K \right\},$$
where ${{g}_{j}}:X\to {{\mathbb{R}}^{1}}$ are lower semicontinuous, ${{h}_{k}}:X\to {{\mathbb{R}}^{1}}$ are continuous on $X$; point ${{x}_{0}}\in X\cap D$; constant $A>f({{x}_{0}})$. Then the global optimization problem  $f(x)\to {{\min }_{x\in X\cap D}}$ is equivalent to the problem of global minimization of the discontinuous penalty function
\[
F(x)=\left\{ \begin{matrix}
   f(x), & x\in X\cap D,  \\
   A+M\left( \sum\nolimits_{j=1}^{J}{\max \left\{ 0,{{g}_{j}}(x) \right\}}+
	\sum\nolimits_{k=1}^{K}{\mid {{h}_{k}}(x) \mid} \right), & x\notin X\cap D;  \\
\end{matrix} \right.
\]    
on the set $X$, $F(x)\to {{\min }_{x\in X}}$, where $M\ge 0$.  

Obviously, the global minima of $F(x)$ on $X$ lie in the region $X\cap D$, where $F(x)=f(x)$, so they coincide with the global minima of $f(x)$ on $X\cap D$.

\bigskip
Now consider the following unconstrained minimization problem:
\begin{equation}
\label{unconstrained_problem_2}
F(x):={{\min }_{y\in \pi_C (x)}}f(y)+M\,{{d}_{C}}(x)\to {{\min }_{x\in {{\text{R}}^{n}}}},	
\;\;\;M>0,
\end{equation}
where $C\subset\mathbb{R}^n$ is a closed set; $f(y)$ is lsc function defined on $C$; 
$\pi_C(x)=\mbox{argmin}_{y\in C}d(x,y)$ is a (possibly multi-valued) projection of point $x$ on $C$; $d_C(x)=\inf_{y\in C}d(x,y)$; $d(x,y)=\|x-y\|$. 

The following statement describes properties of the distance function $d_C(x)$ and projections 
$\pi_C(x)$.

\begin{proposition}
\label{Prop_RW_1}
\cite[Example 1.20]{Rockafellar_Wets_1998}. (Properties of distance functions and projections). For any nonempty, closed set $C\subset {{\mathbb{R}}^{n}}$, the distance ${{d}_{C}}(x)$ of a point $x$ from $C$ depends continuously on $x$, while the projection ${{\pi }_{C}}(x)$, consisting of the points of $C$ nearest to $x$ is nonempty and compact. Whenever ${{y}^{k}}\in {{\pi }_{C}}({{x}^{k}})$ and ${{x}^{k}}\to x$, the sequence $\left\{ {{y}^{k}} \right\}$ is bounded and all its cluster points lie in ${{\pi }_{C}}(x)$, i.e., the mapping $x\to {{\pi }_{C}}(x)$ is compact valued and upper semicontinuous.
\end{proposition}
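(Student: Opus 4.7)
The plan is to handle the four assertions in the order they are stated, since each builds on the previous.

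First, I would show that $d_C$ is $1$-Lipschitz (which is stronger than continuity). For any $y\in C$ and $x,x'\in\mathbb{R}^n$, the triangle inequality gives $\|x-y\|\le\|x-x'\|+\|x'-y\|$; taking the infimum over $y\in C$ yields $d_C(x)\le\|x-x'\|+d_C(x')$, and swapping $x,x'$ gives $|d_C(x)-d_C(x')|\le\|x-x'\|$.

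Next I would establish nonemptiness and compactness of $\pi_C(x)$ at a fixed $x$. Pick any $y_0\in C$ and note that the minimizers of $y\mapsto d(x,y)$ over $C$ are the same as over $C\cap\overline{B}(x,d(x,y_0))$, which is compact because $C$ is closed and the ball is closed and bounded. Since $y\mapsto d(x,y)$ is continuous on this compact set, Weierstrass's theorem gives $\pi_C(x)\ne\emptyset$. Moreover $\pi_C(x)=\{y\in C:\,d(x,y)=d_C(x)\}$ is the intersection of the closed set $C$ with the closed level set $\{y:d(x,y)=d_C(x)\}$, and is contained in $\overline{B}(x,d_C(x))$, hence is compact.

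For the cluster-point property, assume $x^k\to x$ and $y^k\in\pi_C(x^k)$. By the continuity established in step~1, $d(x^k,y^k)=d_C(x^k)\to d_C(x)$, so eventually $\|y^k-x^k\|\le d_C(x)+1$, and $\|y^k\|\le\|x^k\|+d_C(x)+1$ is bounded because $\{x^k\}$ converges. If $y^{k_j}\to y^*$, then $y^*\in C$ by closedness of $C$, and by joint continuity of the distance function together with continuity of $d_C$,
\[
d(x,y^*)=\lim_{j}d(x^{k_j},y^{k_j})=\lim_{j}d_C(x^{k_j})=d_C(x),
\]
so $y^*\in\pi_C(x)$.

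Finally, to obtain upper semicontinuity of $x\mapsto\pi_C(x)$ in the topological sense, I would argue by contradiction: if some open $U\supseteq\pi_C(x)$ failed to contain $\pi_C(x^k)$ along a sequence $x^k\to x$, pick $y^k\in\pi_C(x^k)\setminus U$; by step~3 a subsequence converges to some $y^*\in\pi_C(x)\subseteq U$, contradicting closedness of $\mathbb{R}^n\setminus U$. The only conceptually delicate point is ensuring the cluster-point characterization really upgrades to the standard neighborhood definition of upper semicontinuity, and this works precisely because $\pi_C(x)$ is compact (established in step~2); the boundedness in step~3 is what allows extraction of the subsequence. No step presents a real obstacle beyond bookkeeping.
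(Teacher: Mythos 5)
Your proof is correct. The paper itself offers no proof of this proposition --- it is quoted verbatim from \cite[Example 1.20]{Rockafellar_Wets_1998} as a known result --- and your argument (1-Lipschitz continuity of $d_C$ via the triangle inequality, Weierstrass on $C\cap\overline{B}(x,d(x,y_0))$ for nonemptiness and compactness of $\pi_C(x)$, the bounded-sequence/cluster-point argument, and the upgrade to neighborhood upper semicontinuity via compactness of $\pi_C(x)$) is exactly the standard proof one would supply; every step checks out.
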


Due to Proposition \ref{Prop_RW_1}  problem (\ref{unconstrained_problem_2}) is well defined,
namely, $\pi_C(x)\subset C$ is a nonempty compact set, so there exists ${{y}_{x}}\in {{\pi }_{C}}(x)$ such that ${{\min }_{y\in \pi_C (x)}}f(y)=f({{y}_{x}})$. Remark that function $\varphi (x)={{\min }_{y\in \pi_C (x)}}f(y)$ is lower semicontinuous \cite[Proposition 21]{Aubin_Ekeland_1984}, and function ${{d}_{C}}(x)$ is continuous, so function $F(x)$ is lower semicontinuous.

In what follows we use the following observation.
\begin{lemma}
\label{Geometric_lemma_1}
(A geometric lemma, further properties of projections on nonconvex sets in $\mathbb{R}^{n}$). Let $C\subset {{\mathbb{R}}^{n}}$ be closed set and ${{y}_{x}}$ be a projection on $C$ of point $x\notin C$. Then the point ${{y}_{x}}$ is the unique common projection on $C$  of all points ${{x}_{\lambda }}=(1-\lambda )x+\lambda {{y}_{x}}$, $\lambda \in \left( 0,1 \right]$.
\end{lemma}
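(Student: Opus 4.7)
The plan is to exploit two elementary facts: that $x_\lambda$ lies on the Euclidean segment from $x$ to $y_x$, and that in $\mathbb{R}^n$ (with the Euclidean norm) the triangle inequality $\|a\|+\|b\|\ge\|a+b\|$ holds with equality if and only if $a$ and $b$ are nonnegatively proportional. From these, both the projection property and its uniqueness will drop out by one short triangle-inequality argument each.

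First I would record the two identities
\[
\|x-x_\lambda\|=\lambda\|x-y_x\|,\qquad \|x_\lambda-y_x\|=(1-\lambda)\|x-y_x\|,
\]
which follow directly from $x_\lambda-x=\lambda(y_x-x)$. To show $y_x\in\pi_C(x_\lambda)$, I would argue by contradiction: if some $z\in C$ satisfied $\|x_\lambda-z\|<\|x_\lambda-y_x\|$, then
\[
\|x-z\|\le\|x-x_\lambda\|+\|x_\lambda-z\|<\lambda\|x-y_x\|+(1-\lambda)\|x-y_x\|=\|x-y_x\|,
\]
contradicting $y_x\in\pi_C(x)$. Hence $y_x$ is a nearest point of $C$ to each $x_\lambda$.

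For uniqueness, I would take any $z\in\pi_C(x_\lambda)$, so $\|x_\lambda-z\|\le\|x_\lambda-y_x\|=(1-\lambda)\|x-y_x\|$. The same triangle inequality then gives $\|x-z\|\le\|x-y_x\|$, so $z$ is also a nearest point of $C$ to $x$, and in particular equality holds throughout:
\[
\|x-z\|=\|x-x_\lambda\|+\|x_\lambda-z\|=\|x-y_x\|.
\]
Here is where I use strict convexity of the Euclidean norm: equality in the triangle inequality forces $z-x_\lambda$ and $x_\lambda-x$ to be nonnegatively proportional, so $z$ lies on the ray from $x$ through $y_x$ beyond $x_\lambda$. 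Writing $z=x+t(y_x-x)$ with $t\ge\lambda$, the equality $\|z-x\|=\|x-y_x\|$ yields $t=1$, i.e.\ $z=y_x$.

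The only subtle step is the last one, where I must invoke strict convexity of the Euclidean norm rather than just the triangle inequality. Everything else is a one-line computation using the affine parametrization of $x_\lambda$.
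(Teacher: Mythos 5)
Your proof is correct, and it reaches the same conclusion by a structurally similar contradiction argument but with a different technical tool. The paper takes an arbitrary $y_\lambda\in\pi_C(x_\lambda)$, combines the two minimality inequalities $d(x_\lambda,y_\lambda)\le d(x_\lambda,y_x)$ and $d(x,y_\lambda)\ge d(x,y_x)$ via the law of cosines, and then runs a case analysis on the angle $\varphi$ at $x_\lambda$ ($\varphi\in\{0,\pi\}$ versus $\varphi\in(0,\pi)$) to rule out $y_\lambda\ne y_x$. You instead close the chain $\|x-z\|\le\|x-x_\lambda\|+\|x_\lambda-z\|\le\|x-x_\lambda\|+\|x_\lambda-y_x\|=\|x-y_x\|\le\|x-z\|$ and invoke the equality case of the triangle inequality (strict convexity of the Euclidean norm) to force $z$ onto the segment's supporting ray, after which $t=1$ is immediate. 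This buys you a cleaner argument with no trigonometry and no angle cases, and it makes the essential hypothesis transparent: both proofs ultimately rest on strict convexity of the Euclidean ball, and both would fail in a non-strictly-convex normed space such as $\ell^\infty$ --- which is exactly the caveat behind the paper's remark that the method extends only to metric spaces where this lemma holds. Your argument also delivers existence and uniqueness in one stroke (every $z\in\pi_C(x_\lambda)$ must equal $y_x$), whereas the paper establishes $d(x,y_\lambda)=d(x,y_x)$ first and then separately excludes $y_\lambda\ne y_x$. The only point worth making explicit is that $\|x-y_x\|>0$ (since $x\notin C$ is closed), which justifies dividing by it to conclude $t=1$; you use this implicitly and it is harmless.
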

\begin{proof} 
For $\lambda \in \left( 0,1 \right]$ and ${{x}_{\lambda }}=(1-\lambda )x+\lambda {{y}_{x}}$ find  ${{y}_{\lambda }}\in {{\pi }_{C}}({{x}_{\lambda }})$. Then by properties of projections, $d({{x}_{\lambda }},{{y}_{\lambda }})\le d({{x}_{\lambda }},{{y}_{x}})$ and $d(x,{{y}_{\lambda }})\ge d(x,{{y}_{x}})$.  Denote $\varphi \in [0,\pi ]$ the angle between vectors $\left( {{y}_{x}}-{{x}_{\lambda }} \right)$ and  $\left( {{y}_{\lambda }}-{{x}_{\lambda }} \right)$. By the cosine theorem, 
\[\begin{array}{lcl}
  {{d}^{2}}(x,{{y}_{\lambda }})&=&{{d}^{2}}(x,{{x}_{\lambda }})+{{d}^{2}}({{x}_{\lambda }},{{y}_{\lambda }})-2d(x,{{x}_{\lambda }})d({{x}_{\lambda }},{{y}_{\lambda }})\cos (\pi -\varphi ) \\ 
 &=&{{d}^{2}}(x,{{x}_{\lambda }})+{{d}^{2}}({{x}_{\lambda }},{{y}_{\lambda }})+2d(x,{{x}_{\lambda }})d({{x}_{\lambda }},{{y}_{\lambda }})\cos (\varphi ). 
\end{array}\]
Then the inequality $d(x,{{y}_{\lambda }})\ge d(x,{{y}_{x}})$ jointly with the relation
\[\begin{array}{lcl}
   {{d}^{2}}(x,{{y}_{\lambda }})&=&{{d}^{2}}(x,{{x}_{\lambda }})+{{d}^{2}}({{x}_{\lambda }},{{y}_{\lambda }})+2d(x,{{x}_{\lambda }})d({{x}_{\lambda }},{{y}_{\lambda }})\cos \varphi \\ 
 &\le& {{\left( d(x,{{x}_{\lambda }})+d({{x}_{\lambda }},{{y}_{x}}) \right)}^{2}}={{d}^{2}}(x,{{y}_{x}}), 
\end{array}\]
gives $d(x,{{y}_{\lambda }})=d(x,{{y}_{x}})$. Suppose, ${{y}_{\lambda }}\ne {{y}_{x}}$. Then, for $\varphi \in \left\{ 0,\pi  \right\}$, ${{y}_{\lambda }}$ appears on the line going through $\left\{ x,{{y}_{x}} \right\}$, in these cases ${{y}_{\lambda }}$ is closer to $x$ than ${{y}_{x}}$, a contradiction.   If $\varphi \in \left( 0,\pi  \right)$, then $\mid \cos \varphi  \mid<1$  and we also come to a contradiction,
\[\begin{array}{lcl}
   {{d}^{2}}(x,{{y}_{\lambda }})&=&{{d}^{2}}(x,{{x}_{\lambda }})+{{d}^{2}}({{x}_{\lambda }},{{y}_{\lambda }})+2d(x,{{x}_{\lambda }})d({{x}_{\lambda }},{{y}_{\lambda }})\cos \varphi < \\ 
 &<&{{\left( d(x,{{x}_{\lambda }})+d({{x}_{\lambda }},{{y}_{x}}) \right)}^{2}}={{d}^{2}}(x,{{y}_{x}}),
\end{array}\]
that proves the lemma.
$\Box$
\end{proof}

%\begin{remark} 
%\label{Remark_1} 
The lemma exploits the property that a small sphere included in a larger one can touch the latter only at a single point. In case of noncovex set $C$, there can be several projections ${{y}_{x}}$ of  point $x\notin C$ on $C$. For a convex set $C$, the statement of the Lemma 1 is available in \cite{Galvan_2021}.
%\end{remark}

The following theorem establishes condition of equivalence of problems (\ref{primary_problem_1}) and (\ref{unconstrained_problem_2}).

\begin{theorem}
\label{Theorem_1}
(A general projective penalty function method). Let function $f$ be lower semicontinuous on a non-empty closed set $C$. Any $M>0$ is admitted. Then problems (\ref{primary_problem_1}) and (\ref{unconstrained_problem_2}) are equivalent, i.e., each local (global) minimum of one problem is a local (global) minimum of the other, and the optimal values of the problems in the corresponding minima coincide.
\end{theorem}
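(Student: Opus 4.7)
The plan is to combine two elementary observations about $F$ with the geometric Lemma \ref{Geometric_lemma_1} to get equivalence in both the local and the global sense.

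First I would record the two key identities that follow immediately from the definition of $F$ in (\ref{unconstrained_problem_2}): (i) if $x\in C$ then $\pi_C(x)=\{x\}$ and $d_C(x)=0$, so $F(x)=f(x)$; (ii) for any $x\in\mathbb{R}^n$, picking $y_x\in\pi_C(x)$ with $f(y_x)=\min_{y\in\pi_C(x)}f(y)$, one has $F(y_x)=f(y_x)$ and $F(x)=f(y_x)+M\,d_C(x)\ge f(y_x)=F(y_x)$. Thus every infeasible point is dominated, in the value of $F$, by any of its closest projections on $C$. This already gives the global direction: $\inf_{\mathbb{R}^n}F=\inf_{C}f$, any global minimizer of $f$ on $C$ is a global minimizer of $F$ by (i), and if $x^*$ is a global minimizer of $F$ then $F(x^*)\le F(y_{x^*})=f(y_{x^*})$, combined with $F(x^*)=f(y_{x^*})+M\,d_C(x^*)$ and $M>0$ forces $d_C(x^*)=0$, so $x^*\in C$ and $F(x^*)=f(x^*)$.

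For the local part, the easier implication is: if $x^*\in C$ is a local minimum of $f$ on $C$, then it is a local minimum of $F$ on $\mathbb{R}^n$. Pick $\varepsilon>0$ with $f(y)\ge f(x^*)$ for $y\in C$, $\|y-x^*\|\le\varepsilon$. For $x$ with $\|x-x^*\|\le\varepsilon/2$, any $y\in\pi_C(x)$ satisfies $\|y-x^*\|\le d_C(x)+\|x-x^*\|\le 2\|x-x^*\|\le\varepsilon$, hence $f(y_x)\ge f(x^*)$ and $F(x)\ge f(x^*)=F(x^*)$. The main obstacle is the reverse implication: that a local minimum $x^*$ of $F$ must actually lie in $C$. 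Here I would argue by contradiction using Lemma \ref{Geometric_lemma_1}. If $x^*\notin C$, pick $y_{x^*}\in\pi_C(x^*)$ achieving the inner minimum in $F(x^*)$, and consider the segment $x_\lambda=(1-\lambda)x^*+\lambda y_{x^*}$ for $\lambda\in(0,1]$. By Lemma \ref{Geometric_lemma_1}, $\pi_C(x_\lambda)=\{y_{x^*}\}$, so $\min_{y\in\pi_C(x_\lambda)}f(y)=f(y_{x^*})$ and $d_C(x_\lambda)=(1-\lambda)d_C(x^*)$. Consequently
\[
F(x_\lambda)=f(y_{x^*})+M(1-\lambda)d_C(x^*)<f(y_{x^*})+M\,d_C(x^*)=F(x^*),
\]
for every $\lambda\in(0,1]$, and $x_\lambda\to x^*$ as $\lambda\to 0^+$, contradicting local minimality. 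Thus $x^*\in C$, and since $F=f$ on $C$, the local minimality of $x^*$ for $F$ on $\mathbb{R}^n$ restricts to local minimality of $f$ on $C$, with $F(x^*)=f(x^*)$.

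The only nontrivial step is the use of Lemma \ref{Geometric_lemma_1}; without the uniqueness of the projection of $x_\lambda$, one could not rule out a jump in the inner minimum $\min_{y\in\pi_C(x_\lambda)}f(y)$ as $\lambda\to 0$. Everything else is bookkeeping with the triangle inequality and the monotonicity $F\ge f\circ y_x$.
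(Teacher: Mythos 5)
Your proof is correct and follows essentially the same route as the paper: the crucial step --- showing that a local (or global) minimizer of $F$ must lie in $C$ --- is handled, exactly as in the paper, by sliding along the segment $x_\lambda=(1-\lambda)x^*+\lambda y_{x^*}$ and invoking Lemma~\ref{Geometric_lemma_1} to keep the projection (and hence the inner minimum of $f$) frozen at $y_{x^*}$. Two of your elementary steps are in fact slightly cleaner than the paper's: you settle the global case with the single inequality $F(x^*)\le F(y_{x^*})=f(y_{x^*})$ forcing $d_C(x^*)=0$ (the paper reuses the segment argument there as well), and in the ``local minimum of $f$ implies local minimum of $F$'' direction you replace the paper's appeal to upper semicontinuity of $\pi_C$ (Proposition~\ref{Prop_RW_1}) by the explicit bound $\|y-x^*\|\le d_C(x)+\|x-x^*\|\le 2\|x-x^*\|$, which is valid since $x^*\in C$ gives $d_C(x)\le\|x-x^*\|$.
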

\begin{proof}
 Let ${{x}^{*}}\in C$ be a global minimum of (\ref{primary_problem_1}). Take an arbitrary $x\in {{\mathbb{R}}^{n}}$ and find ${{y}_{x}}\in \pi_C (x)$ such that $f({{y}_{x}})={{\min }_{y\in \pi_C (x)}}f(y)$. Then $F(x)\ge f({{y}_{x}})\ge f({{x}^{*}})=F({{x}^{*}})$,  thus ${{x}^{*}}$ is a global minimum of (\ref{unconstrained_problem_2}) with the same minimal value $f({{x}^{*}})$.

Let ${{x}^{**}}$ be a global minimum of (\ref{unconstrained_problem_2}). First let us show that ${{x}^{**}}\in C$, suppose the opposite, ${{x}^{**}}\notin C$. By Proposition \ref{Prop_RW_1}, there exists a compact projection set $\pi_C ({{x}^{**}})\subseteq C$ and ${{y}^{**}}\in \arg {{\min }_{y\in \pi_C ({{x}^{**}})}}f(y)$. Consider points ${{x}_{\lambda }}=(1-\lambda ){{x}^{**}}+\lambda {{y}^{**}}$, $\lambda \in [0,1]$. By Lemma \ref{Geometric_lemma_1}, in the Eucleadian space ${{\pi }_{C}}({{x}_{\lambda }})={{y}^{**}}$, i.e. projections of points ${{x}_{\lambda }}$, $\lambda \in (0,1]$,  coincide with ${{y}^{**}}$, the projection of ${{x}^{**}}$ on $C$.  Remark that  $d_C (x_\lambda )=d\left( {{x}_{\lambda }},{{y}^{**}} \right)=
(1-\lambda)d(x^{**},y^{**})$, $\lambda \in [0,1]$, and 
\[\begin{array}{lcl}
F({{x}_{\lambda }})&=&{{\min }_{y\in \pi_C ({{x}_{\lambda}})}}f(y)+M\,{{d}_{C}}({{x}_{\lambda }})=f({{y}^{**}})+M\,d({{x}_{\lambda }},{{y}^{**}})\\
&=&f({{y}^{**}})+M\,(1-\lambda)d({{x}^{**}},{{y}^{**}})=F(x^{**})-\lambda d({{x}^{**}},{{y}^{**}}).
\end{array}\]
From here it follows ${{x}^{**}}\in C$, otherwise $F({{x}_{\lambda }})<F({{x}^{**}})$, $d({{x}_{\lambda }},{{x}^{**}})=\lambda d({{y}^{**}},{{x}^{**}})$  for any $\lambda \in (0,1]$, a contradiction. But for $x\in C$ it holds ${{d}_{C}}(x)=0$ and $f(x)=F(x)\ge F({{x}^{**}})=f({{x}^{**}})$, hence ${{x}^{**}}$ is a global minimum of (\ref{primary_problem_1}).

	Let ${{x}^{*}}$ be a local minimum of (\ref{primary_problem_1}). Then there exist a neighborhood $V({{x}^{*}})$ of ${{x}^{*}}$ such that $f(x)\ge f({{x}^{*}})$  for all $x\in V({{x}^{*}})\cap C$. Let us show that ${{x}^{*}}$ is a local minimum of $F(x)$.  Since $\pi_C ({{x}^{*}})={{x}^{*}}$ and $\pi_C (\cdot )$ is upper semicontinuous, then for $V({{x}^{*}})$ there is an open vicinity $v({{x}^{*}})$ of ${{x}^{*}}$ such that $\pi_C (x)\subseteq V({{x}^{*}})$ for all $x\in v({{x}^{*}})$. Consider $x\in v({{x}^{*}})$ and find ${{y}_{x}}\in \pi_C (x)$ such that ${{d}_{C}}(x)=d(x,{{y}_{x}})$ and $f({{y}_{x}})=\underset{y\in \pi_C (x)}{\mathop{\inf }}\,f(y)$. Then for $x\in v({{x}^{*}})$ it holds
\[\begin{array}{lcl}
F(x)&=&\underset{y\in \pi_C (x)}{\mathop{\inf }}\,f(y)+M\,{{d}_{C}}(x)=f({{y}_{x}})+M\,d(x,{{y}_{x}})\\
&\ge& f({{y}_{x}})\ge f({{x}^{*}})=F(x^*).
\end{array}\]

If ${{x}^{**}}$ is a local minimum of (\ref{unconstrained_problem_2}), then, as was proven before, it is impossible that ${{x}^{**}}$ does not belong to $C$, i.e. ${{x}^{**}}\in C$. But since on $C$ functions $F(x)$ and $f(x)$ coincide, then ${{x}^{**}}$ is a local minimum of (\ref{primary_problem_1}).
\end{proof}

\begin{remark}
\label{Remark_2} 	Formulation (\ref{unconstrained_problem_2}) assumes finding all projection points ${{\pi }_{C}}(x)$ in a nonconvex constraint set $C$. Generally, this is an impractical task. If the non-convex feasible set $C={{C}_{1}}\cup ...\cup {{C}_{m}}$ is the union of a finite number of convex sets ${{C}_{1}},...,{{C}_{m}}$, then, of course, the original problem (\ref{primary_problem_1}) then can be split into $m$ problems of  form (\ref{primary_problem_1}) with convex constraint sets ${{C}_{i}}$, $i=1,...,m$. However, it also makes sense to solve single problem (\ref{unconstrained_problem_2}), since one can find all projections ${{\pi }_{{{C}_{1}}}}(x),...,{{\pi }_{{{C}_{m}}}}(x)$ and select among them ${{\pi }_{C}}(x)$, the closest to $x$. This can be done in parallel and not all projections may need to be found (exactly). The penalty function $F(x)=\underset{y\in {{\pi }_{C}}(x)}{\mathop{\inf }}\,f(y)+M\,{{d}_{C}}(x)$ then may be discontinuous and thus has to be solved by the appropriate method, e.g., by smoothing method \cite{ENW_1995, Knopov_Norkin_2022}. The other option is to find all projections ${{\pi }_{C}}(x)$ by the branch and bound method.
\end{remark}

\begin{remark}
\label{Remark_3} Theorem \ref{Theorem_1} implies also that both problems (\ref{primary_problem_1}) and (\ref{unconstrained_problem_2}) either have local (global) minima or do not have them.
%\end{remark}

%\begin{remark}
\label{Remark_4} The projective penalty function method is extendable to those metric spaces where statements of Proposition \ref{Prop_RW_1} and Lemma \ref{Geometric_lemma_1} hold.
\end{remark}

\section{The case of convex constraint set \cite{Galvan_2021, Norkin_2020}}
\label{convex}
Let us consider problem (\ref{primary_problem_1}) in the case of a convex constraint set $C$. Then projection $\pi_C(x)$  is single-valued and problem (\ref{unconstrained_problem_2}) becomes:
\begin{equation}
\label{unconstrained_problem_3}
F(x):=f(\pi_C (x))+M\cdot{{d}_{C}}(x)\to {{\min }_{x\in {{\text{R}}^{n}}}},	
\;\;\;M>0,
\end{equation} 

If $C$ is a convex closed set, then function ${{d}_{C}}(x)$ is continuous \cite[Example 1.20]{Rockafellar_Wets_1998}, and the mapping ${{\pi }_{C}}(x)$ is single valued and continuous on ${{\mathbb{R}}^{n}}$ \cite[Example 2.25]{Rockafellar_Wets_1998} (even non-stretching \cite[Corrolary 12.20]{Rockafellar_Wets_1998}). If function $f$ is continuous (lower semicontinuous) on a convex closed set $C$, then function $F(x)$  in (\ref{unconstrained_problem_3}) is continuous (lower semicontinuous) on ${{\mathbb{R}}^{n}}$. Thus problem (\ref{unconstrained_problem_3}) is well defined and  the statement of Theorem \ref{Theorem_1} certainly holds true.

	If  $f(x)$ is convex and defined on a convex set $C$, then $F(x)$ in (\ref{unconstrained_problem_3}) is not necessarily convex on ${{\mathbb{R}}^{n}}$. 
%\begin{example}
%\label{Example_1}
%(Possibility of nonconvexity). 
For example, if $f(x)=x$, $C=\left\{ x\in {{\text{R}}^{1}}:x\le 0 \right\}$, $M<1$, then $F(x)=\min \{x,M\,x\}$.	
%\end{example}

However, in the convex case, although the exact penalty function   in (\ref{unconstrained_problem_3}) can be nonconvex, it has no additional (false) stationary points, which are not stationary for the primary problem (\ref{primary_problem_1}) \cite[Proposition 4]{Galvan_2021}.
	
\begin{lemma}
\label{Lemma_2a}
(Lipschitz property of the exact penalty function, \cite[Lemma 1, where $M=1$]{Galvan_2021}). If  function $f$ is Lipschitzian with constant $L$ on a convex closed set $C$, then the function $F(x)$ defined by equality (\ref{unconstrained_problem_3}) is also Lipschitzian with constant $(L+2M)$ on the whole space ${{\mathbb{R}}^{n}}$.
\end{lemma}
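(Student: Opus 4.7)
The plan is to estimate $|F(x)-F(y)|$ for arbitrary $x,y\in\mathbb{R}^n$ by splitting it, via the triangle inequality, into a contribution from the $f(\pi_C(\cdot))$ term and one from the $M\,d_C(\cdot)$ term, and then to control each using two well-known non-expansive properties of projections onto convex closed sets.

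First I would recall the two underlying estimates. Since $C$ is closed and convex, the Euclidean projection $\pi_C$ is single-valued and non-expansive on $\mathbb{R}^n$, i.e. $\|\pi_C(x)-\pi_C(y)\|\le\|x-y\|$, a fact already cited just above the lemma as \cite[Corollary 12.20]{Rockafellar_Wets_1998}. The distance function $d_C$ is itself 1-Lipschitz: for any $z\in C$, $d_C(x)\le\|x-z\|\le\|x-y\|+\|y-z\|$, and infimizing over $z\in C$ and exchanging $x$ and $y$ yields $|d_C(x)-d_C(y)|\le\|x-y\|$.

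Next, for any $x,y\in\mathbb{R}^n$ the triangle inequality applied to $F(x)-F(y)$ gives
\[
|F(x)-F(y)| \le |f(\pi_C(x))-f(\pi_C(y))| + M\,|d_C(x)-d_C(y)|.
\]
Because $\pi_C(x),\pi_C(y)\in C$, the Lipschitz hypothesis on $f$ applies, so $|f(\pi_C(x))-f(\pi_C(y))|\le L\|\pi_C(x)-\pi_C(y)\|\le L\|x-y\|$ by non-expansiveness. The second term is at most $M\|x-y\|$. Combining these two bounds gives $|F(x)-F(y)|\le(L+M)\|x-y\|$, which in particular implies the claimed bound $(L+2M)\|x-y\|$.

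There is no substantive obstacle here once the two non-expansive properties are in hand; the real content of the lemma is that convexity of $C$ is what supplies them, since both the single-valuedness and the non-expansiveness of $\pi_C$ can fail on general nonconvex sets — which is why this Lipschitz extension result is stated only in the convex setting.
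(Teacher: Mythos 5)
Your proof is correct and complete. Note that the paper itself gives no proof of this lemma --- it is imported verbatim from Galvan et al.\ (cited as Lemma~1 there, with $M=1$) --- so there is no in-paper argument to compare against line by line. Your decomposition is the natural one: split $|F(x)-F(y)|$ into the $f\circ\pi_C$ part, controlled by the Lipschitz hypothesis on $f$ together with the non-expansiveness $\|\pi_C(x)-\pi_C(y)\|\le\|x-y\|$ of the projection onto a closed convex set, and the $M\,d_C$ part, controlled by the $1$-Lipschitz property of the distance function. Both ingredients are standard and correctly justified (the $1$-Lipschitzness of $d_C$ in fact needs no convexity at all). A pleasant byproduct is that your argument yields the sharper constant $L+M$ rather than the stated $L+2M$; the weaker constant in the literature presumably arises from bounding $|d_C(x)-d_C(y)|=\bigl|\,\|x-\pi_C(x)\|-\|y-\pi_C(y)\|\,\bigr|$ by $\|x-y\|+\|\pi_C(x)-\pi_C(y)\|\le 2\|x-y\|$ instead of using the direct triangle-inequality bound on the infimum defining $d_C$, as you do. Since Lipschitz with constant $L+M$ trivially implies Lipschitz with constant $L+2M$ for $M>0$, your proof establishes the lemma as stated. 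Your closing observation about where convexity enters --- single-valuedness and non-expansiveness of $\pi_C$ --- is also the right diagnosis of why the result is confined to the convex setting.
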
	

In \cite[Proposition 4]{Galvan_2021}, it was also shown that in conditions of Lemma \ref{Lemma_2a} all stationary by Clarke \cite{Clarke_1990} points of the Lipschitz function $F(x)$ in (\ref{unconstrained_problem_3}), i.e. $x\in {{\mathbb{R}}^{n}}$ such that $0\in {{\partial }_{Clarke}}F(x)$, are stationary for the constrained problem (\ref{primary_problem_1}).	

Let $f:C\to {{\mathbb{R}}^{1}}$ and the convex set $C\subseteq {{\mathbb{R}}^{n}}$ in (\ref{primary_problem_1}) has a representation:
$$C=\left\{ x\in\mathbb{R}^n\mid{{g}_{j}}(x)\le 0,\,j=1,...,J\,;\,\,{{h}_{k}}(x)=0,\,k=1,...,K \right\},$$
where functions  ${{g}_{j}}$ are continuous and convex, and ${{h}_{k}}$ are linear. Denote ${{\pi }_{C}}(x)$ the projection of point $x$ on the set $C$. For a simple set $C$ given by linear constraints, the problem of searching projection ${{\pi }_{C}}(x)$ is either solved analytically or reduced to a quadratic programming problem. The properties of the exact penalty function (outside the feasibility set) may depend on the form of the penalty term. For example, the performance of optimization methods applied to problem (\ref{unconstrained_problem_2}) depends on the choice and control of parameter   \cite{Galvan_2021}.  So, we introduce one more penalty function
\begin{equation}
\label{eqn_5}
F(x):=f\left( {{\pi }_{C}}(x) \right)+M\left( \sum\nolimits_{j=1}^{J}{\max \left\{ 0,{{g}_{j}}(x) \right\}}+\sum\nolimits_{k=1}^{K}{\mid {{h}_{k}}(x) \mid} \right),\;\;\;	M>0, 
\end{equation}
and consider the problem of unconstrained optimization:
\begin{equation}
\label{eqn_6}
F(x)\to \underset{x\in {{\mathbb{R}}^{n}}}{\mathop{\min }}\,.	
\end{equation}
Note that in (\ref{eqn_5}) function $f$   may not be defined outside the feasible domain $C$.

\begin{theorem}
\label{Theorem_4}
(A projective penalty function for a convex constraint set given by equalities and inequalities). Let function $f$ be lower semicontinuous on a non-empty closed convex set $C$. Then problems (\ref{primary_problem_1}) and (\ref{eqn_5})-(\ref{eqn_6}) are equivalent, i.e., each local (global) minimum of one problem is a local (global) minimum of the another, and the optimal values of the problems in the corresponding minima coincide.
\end{theorem}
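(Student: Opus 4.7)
The plan is to mirror the proof of Theorem \ref{Theorem_1}, replacing the Euclidean distance penalty $M\,d_C(x)$ with the composite penalty $P(x):=M\bigl(\sum_{j}\max\{0,g_j(x)\}+\sum_{k}|h_k(x)|\bigr)$, and to exploit convexity of the $g_j$ and linearity of the $h_k$ where Lemma \ref{Geometric_lemma_1} was used to make $d_C(x_\lambda)$ scale linearly. The easy direction is immediate: if $x^{*}\in C$ is a global minimum of (\ref{primary_problem_1}), then $\pi_C(x)\in C$ and $P(x)\ge 0$ for every $x$, so $F(x)\ge f(\pi_C(x))\ge f(x^{*})=F(x^{*})$, which makes $x^{*}$ a global minimum of (\ref{eqn_5})--(\ref{eqn_6}) with the same value.

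For the reverse direction, let $x^{**}$ be a global minimum of (\ref{eqn_5})--(\ref{eqn_6}); the key step is to show $x^{**}\in C$. Put $y^{**}=\pi_C(x^{**})$ (single-valued, since $C$ is convex and closed) and consider $x_\lambda=(1-\lambda)x^{**}+\lambda y^{**}$, $\lambda\in[0,1]$. By Lemma \ref{Geometric_lemma_1}, $\pi_C(x_\lambda)=y^{**}$ for every $\lambda\in(0,1]$, so the term $f(\pi_C(x_\lambda))=f(y^{**})$ is constant along the segment. Convexity of $g_j$ together with $g_j(y^{**})\le 0$ yields $g_j(x_\lambda)\le(1-\lambda)g_j(x^{**})$, whence $\max\{0,g_j(x_\lambda)\}\le(1-\lambda)\max\{0,g_j(x^{**})\}$; linearity of $h_k$ and $h_k(y^{**})=0$ give $|h_k(x_\lambda)|=(1-\lambda)|h_k(x^{**})|$. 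Summing, $P(x_\lambda)\le(1-\lambda)P(x^{**})$. If $x^{**}\notin C$, then $P(x^{**})>0$, so $F(x_\lambda)\le f(y^{**})+(1-\lambda)P(x^{**})=F(x^{**})-\lambda P(x^{**})<F(x^{**})$ for every $\lambda\in(0,1]$, contradicting global optimality of $x^{**}$. Hence $x^{**}\in C$, and since $F=f$ on $C$, $x^{**}$ is a global minimum of (\ref{primary_problem_1}) with the same value.

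The local case adapts the same two pieces. If $x^{*}\in C$ is a local minimum of (\ref{primary_problem_1}) with neighborhood $V(x^{*})$ on which $f\ge f(x^{*})$, continuity of $\pi_C$ in the convex case (see Section \ref{convex}) supplies a smaller neighborhood $v(x^{*})$ with $\pi_C(v(x^{*}))\subseteq V(x^{*})$, and then $F(x)\ge f(\pi_C(x))\ge f(x^{*})=F(x^{*})$ for $x\in v(x^{*})$. Conversely, if $x^{**}$ is a local minimum of $F$, the segment argument above can be confined to any neighborhood of $x^{**}$ by shrinking $\lambda$, so the same strict decrease $F(x_\lambda)<F(x^{**})$ rules out $x^{**}\notin C$; once $x^{**}\in C$, coincidence of $F$ and $f$ on $C$ gives that $x^{**}$ is a local minimum of (\ref{primary_problem_1}).

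The main obstacle is the contraction estimate $P(x_\lambda)\le(1-\lambda)P(x^{**})$: this is exactly where convexity of the $g_j$ and linearity of the $h_k$ are indispensable, since dropping convexity of a single $g_j$ or allowing a nonlinear equality $h_k$ would let the penalty term fail to scale down along the segment to $y^{**}$, and the contradiction driving $x^{**}\in C$ would collapse. The remaining bookkeeping is a routine transcription of the argument in Theorem \ref{Theorem_1}.
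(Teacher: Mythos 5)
Your proposal is correct and follows essentially the same route as the paper: the easy direction via the non-stretching/continuity of $\pi_C$ on a convex set, and the reverse direction via the segment $x_\lambda$ to $\pi_C(x^{**})$, with Lemma \ref{Geometric_lemma_1} freezing the term $f(\pi_C(x_\lambda))$ and convexity forcing the penalty to decrease strictly. The only cosmetic difference is that you derive the contraction $P(x_\lambda)\le(1-\lambda)P(x^{**})$ term by term, while the paper packages the same estimate as convexity of the one-variable function $\Phi(\lambda)=F(x_\lambda)$.
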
 
\begin{proof} 
Let ${{x}^{*}}\in C$ be a local minimum of problem (\ref{primary_problem_1}), i.e., for some neighborhood ${{V}_{1}}({{x}^{*}})$  of the point ${{x}^{*}}$, it is also a global minimum point of $f(x)$ on the set ${{V}_{1}}({{x}^{*}})\cap C$. Obviously, for any $x\in {{V}_{1}}({{x}^{*}})$, due to the non-stretching property of the projection operator ${{\pi }_{C}}(\cdot )$ onto a convex set \cite[Corrolary 12.20]{Rockafellar_Wets_1998}, it is satisfied ${{\pi }_{C}}(x)\in {{V}_{1}}({{x}^{*}})$ and, thus,
\[\begin{array}{lcl}
F(x)&=&f\left( {{\pi }_{C}}(x) \right)+M\left( \sum\nolimits_{j=1}^{J}{\max \left\{ 0,{{g}_{j}}(x) \right\}}+\sum\nolimits_{k=1}^{K}{\mid {{h}_{k}}(x) \mid} \right)\\
&\ge& f\left( {{\pi }_{C}}(x) \right)\ge f\left( {{x}^{*}} \right)=F({{x}^{*}}),
\end{array}\]
i.e., ${{x}^{*}}$  is a local minimum of the function $F$. 

	Let ${{x}^{**}}$ be a local minimum point of function $F$, i.e., for some neighborhood $V({{x}^{**}})\subset {{\mathbb{R}}^{n}}$  point ${{x}^{**}}$ is the global minimum of the function $F$ on the set $V({{x}^{**}})$. Let's show that ${{x}^{**}}\in C$. Assume the contrary, ${{x}^{**}}\notin C$, then
\[\begin{array}{lcl}	
F({{x}^{**}})&=&f\left( {{\pi }_{C}}({{x}^{**}}) \right)+M\left( \sum\nolimits_{j=1}^{J}{\max \left\{ 0,{{g}_{j}}({{x}^{**}}) \right\}}+\sum\nolimits_{k=1}^{K}{\mid {{h}_{k}}({{x}^{**}}) \mid} \right)\\
&>&f\left( {{\pi }_{C}}({{x}^{**}}) \right).
\end{array}\]
Denote, ${{x}_{\lambda }}=(1-\lambda ){{x}^{**}}+\lambda {{\pi }_{C}}({{x}^{**}})$. By Lemma \ref{Geometric_lemma_1}, it holds ${{\pi }_{C}}({{x}_{\lambda }})={{x}^{**}}$.  Let us consider a convex function 
\[\begin{array}{lcl}
   \Phi (\lambda )&=&F\left( {{x}_{\lambda }} \right)=f\left( {{\pi }_{C}}({{x}_{\lambda }}) \right)+M\left( \sum\nolimits_{j=1}^{J}{\max \left\{ 0,{{g}_{j}}({{x}_{\lambda }}) \right\}}+
	\sum\nolimits_{k=1}^{K}{\mid {{h}_{k}}({{x}_{\lambda }}) \mid} \right) \\ 
 &=&f\left( {{\pi }_{C}}({{x}^{**}}) \right)+M\left( \sum\nolimits_{j=1}^{J}{\max \left\{ 0,{{g}_{j}}({{x}_{\lambda }}) \right\}}+\sum\nolimits_{k=1}^{K}{\mid {{h}_{k}}({{x}_{\lambda }}) \mid} \right),\,\,\,\,\lambda \in [0,1]. 
\end{array}\]
Obviously, 
\[\begin{array}{lcl}
   \Phi (\lambda )&=&F\left( {{x}_{\lambda }} \right)\le (1-\lambda )F({{x}^{**}})+\lambda F({{\pi }_{C}}({{x}^{**}}))  \\ 
 &=&F({{x}^{**}})-\lambda \left( F({{x}^{**}})-F({{\pi }_{C}}({{x}^{**}}) \right) \\ 
 &=&F({{x}^{**}})-\lambda \left( F({{x}^{**}})-f({{\pi }_{C}}({{x}^{**}}) \right)<F({{x}^{**}}),\,\,\,\,\,\,\,\,\,\,\lambda \in \left( 0,1 \right]. 
\end{array}\]
For all sufficiently small $\lambda $, we have ${{x}_{\lambda }}\in V({{x}^{**}})$ and $F({{x}_{\lambda }})<F({{x}^{**}})$, i.e., we obtain a contradiction that ${{x}^{**}}$ is not a local minimum of the function $F$. In this way,  ${{x}^{**}}\in C$. For all $x\in V({{x}^{**}})\cap C$, it holds $f(x)=f({{\pi }_{C}}(x))=F(x)\ge F({{x}^{**}})=f({{x}^{**}})$, i.e., the point ${{x}^{**}}$ is also a local minimum point for $f$ on $C$ and $F({{x}^{**}})=f({{x}^{**}})$. The proof of the coincidence of global minima is carried out in a similar way. 
\end{proof}

\section{The variant of the method with non-Euclidean projection}
\label{non-Euclid}
The projection operation   may be computationally costly since it assumes solution of a quadratic optimization problem under a convex constraint. This can be a hard problem if constraints are nonlinear. So consider a variant of the method, which instead of projecting finds a root of a one-dimensional nonlinear equation. 

If some admissible point ${{x}_{0}}\in C$  is known, the exact penalty function can be constructed as follows. Let $x\notin C$ and $y(x)$ be the nearest to $x$ point of the set $C$ lying on the segment connecting ${{x}_{0}}$ and $x$. Let us define the mapping
\begin{equation}
\label{non-euclid_proj}
{{p}_{C}}(x)=\left\{ \begin{matrix}
   x, & x\in C,  \\
   y(x), & x\notin C,  \\
\end{matrix} \right. 
\end{equation}
and the penalty functions ${{r}_{C}}(x)=\left\| x-{{p }_{C}}(x) \right\|$ and 
$$F(x):=f({{p}_{C}}(x))+M{{r}_{C}}(x).$$ 

%\begin{remark}
%\label{Remark_5} 
If $x_0\in\mathbb{R}^n$ and $g({{x}_{0}})<0$, then finding the projection ${{p}_{C}}(x)$ is reduced to finding the largest root of the one dimensional equation $\varphi (\lambda ):=g\left( {{x}_{0}}+\lambda (x-{{x}_{0}}) \right)=0$ on the interval $\lambda \in [0,1]$, which can be done efficiently by a one dimensional search.
%\end{remark}

The next two lemmas give sufficient conditions for mapping $p_C(x)$ to be 
continuous.
\begin{lemma}
\label{contin_centr_proj}
Let the set $C$ is bounded and convex, $x_0$ is an internal point in $C$. 
Then the projection mapping $p_C(x)$ (\ref{non-euclid_proj}) is continuous.
\end{lemma}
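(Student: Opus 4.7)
The plan is to reduce continuity of $p_C$ to continuity of the Minkowski gauge of $C$ centered at $x_0$, via a single closed-form expression valid both inside and outside $C$. Since $C$ is convex, bounded, and $x_0$ lies in the interior of $C$, the translate $K = C - x_0$ is convex, bounded, and contains $0$ as an interior point. I would introduce its Minkowski functional
$$\mu(v) = \inf\{t > 0 : v \in tK\}.$$
Standard arguments (the interior point $0$ gives an upper estimate $\mu(v) \le c_1\|v\|$; boundedness of $K$ gives a lower estimate $\mu(v) \ge c_2\|v\|$ for $v \neq 0$) show that $\mu$ is positively homogeneous, sublinear, and Lipschitz, hence continuous on $\mathbb{R}^n$, with the characterization $x \in C \iff \mu(x - x_0) \le 1$.

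Next I would verify the unified formula
$$p_C(x) = x_0 + \frac{x - x_0}{\max\{1,\, \mu(x - x_0)\}} \quad \text{for every } x \in \mathbb{R}^n.$$
For $x \in C$ the denominator equals $1$ and the expression reduces to $x$, matching the definition of $p_C$ on $C$. For $x \notin C$, the segment $\{x_0 + s(x - x_0) : s \in [0,1]\}$ lies in $C$ precisely when $s\,\mu(x - x_0) \le 1$; the largest admissible value is $s^{*} = 1/\mu(x - x_0) \in (0,1)$, so $y(x) = x_0 + s^{*}(x - x_0)$, and this agrees with the formula since $\max\{1,\mu(x-x_0)\} = \mu(x-x_0)$ in this regime. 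Note that the two definitions also agree on $\partial C$, where $\mu(x-x_0) = 1$.

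Continuity of $p_C$ then follows immediately as a composition of continuous maps: $x \mapsto x - x_0$, then $\mu$, then $\max\{1,\cdot\}$, are all continuous, and the denominator is bounded below by $1$, so no division issue arises. The main technical obstacle is really only the first step, namely establishing Lipschitzness (or at least continuity) of the Minkowski functional $\mu$; this is standard for bounded convex bodies with $0$ in the interior, and once it is in hand the continuity of $p_C$ is a matter of bookkeeping in the closed-form expression.
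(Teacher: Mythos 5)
Your proof is correct, and it takes a genuinely different route from the paper's. The paper works with the radial distance $l(x)=\|y(x)-x_0\|$: it asserts that $l$ is concave near every $x\ne x_0$, deduces continuity of $l$ from concavity, and then represents $p_C(x)$ as $x_0+l(x)(x-x_0)/\|x-x_0\|$, continuity at $x_0$ itself being automatic because $x_0$ is interior to $C$ and $p_C$ is the identity on a neighbourhood of $x_0$. You instead encode everything in the Minkowski gauge $\mu$ of $C-x_0$ and the single formula $p_C(x)=x_0+(x-x_0)/\max\{1,\mu(x-x_0)\}$, which reduces the lemma to the standard fact that the gauge of a bounded convex body with $0$ in its interior is Lipschitz. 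Your route is more robust: it sidesteps the paper's local-concavity claim, which is left unproved and is delicate as stated (for $x$ in the interior of $C$ one has $l(x)=\|x-x_0\|$, which is convex rather than concave, so the claim can only be meant for $x\notin C$ and still needs an argument), and it yields local Lipschitz continuity of $p_C$ rather than bare continuity, since the denominator is bounded below by $1$. The one point worth a sentence in your write-up is that the equivalence $x\in C\iff\mu(x-x_0)\le 1$ uses closedness of $C$; this is harmless because $C$ is closed throughout the paper (and the nearest point $y(x)$ of $C$ on the segment would not exist otherwise), but it should be made explicit.
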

\begin{proof}
Let $y(x)$ be defined by as in (\ref{non-euclid_proj}). 
Function $l(x)=\|y(x)-x_0\|$ is concave near any $x\ne x_0$, 
so $l(x)$ is continuous near $x$. 
Hence, mapping $p(x)=l(x)(x-x_0)/\|x-x_0\|$ 
is continuous at $x\ne x_0$ and mapping $p_C(x)$ is continuous everywhere. 
\end{proof}
\begin{lemma}
\label{Lemma_4} (Continuity of the non-Euclidean projection on a set given by a monotone function). Let $C=\left\{ x\in \mathbb{R}_{+}^{n}:\,\,\,g(x)\le 0 \right\}$, where $g(x)$ is a strictly monotone continuous  function on  $\mathbb{R}_{+}^{n}$, i.e., if ${{x}_{1}}>{{y}_{1}},...,{{x}_{n}}>{{y}_{n}}$, then $g(x)>g(y)$. Assume that $g(0)<0$ and ${{\sup }_{\lambda \ge 0}}g(\lambda x)>0$ for any $x\in \mathbb{R}_{+}^{n}$, $x\ne 0$. Denote for each $x\in \mathbb{R}_{+}^{n}$, $x\ne 0$, the root ${{\lambda }_{x}}$ of the equation  $g(\lambda x)=0$. Then the projection mapping ${{p}_{C}}(x)=\min\{1,\lambda(x)\}x$ for $x\ne 0$ and
$p_C(0)=0$ is continuous on $\mathbb{R}_{+}^{n}$.
\end{lemma}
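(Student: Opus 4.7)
The plan is to reduce continuity of $p_C$ on all of $\mathbb{R}_+^n$ to continuity of the scalar root function $\lambda(\cdot)$ on $\mathbb{R}_+^n\setminus\{0\}$, together with a trivial bound at the origin. Because $p_C(x)=\min\{1,\lambda(x)\}\,x$ and the clipping factor is bounded by $1$, we get $\|p_C(x)\|\le \|x\|$, so $p_C(x^k)\to 0=p_C(0)$ whenever $x^k\to 0$; continuity at the origin is therefore immediate and does not even require a limit of $\lambda$ there (which in general need not exist). On $\mathbb{R}_+^n\setminus\{0\}$, since $x\mapsto x$ and $t\mapsto\min\{1,t\}$ are continuous, it suffices to prove continuity of $x\mapsto \lambda(x)$.

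To establish continuity of $\lambda$ at a fixed $x_0\ne 0$, set $L:=\lambda(x_0)$. Using $g(0)<0$, $g(Lx_0)=0$, $\sup_{\lambda\ge 0}g(\lambda x_0)>0$, and the strict monotonicity of $g$ along the ray through $x_0$, the map $\lambda\mapsto g(\lambda x_0)$ is a strictly increasing continuous function of $\lambda$ that crosses zero at $L$. Hence, for any $\varepsilon>0$ one can pick bracketing values $\lambda^-\in(L-\varepsilon,L)$ and $\lambda^+\in(L,L+\varepsilon)$ with $g(\lambda^- x_0)<0<g(\lambda^+ x_0)$. Continuity of $g$ then yields a neighbourhood $U$ of $x_0$ on which $g(\lambda^- x)<0<g(\lambda^+ x)$ still holds, and the intermediate value theorem forces the root $\lambda(x)$ to lie in $(\lambda^-,\lambda^+)\subset(L-\varepsilon,L+\varepsilon)$. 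This gives $|\lambda(x)-L|<\varepsilon$ on $U$, i.e., continuity of $\lambda$ at $x_0$. Assembling the two parts yields continuity of $p_C$ on $\mathbb{R}_+^n$.

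The delicate step, and the main obstacle, is justifying the strict monotonicity of $\lambda\mapsto g(\lambda x_0)$ used in the bracketing. The hypothesis as stated phrases strict monotonicity in terms of strict inequality in \emph{all} coordinates, which applies literally to $\lambda x_0$ versus $\lambda' x_0$ only when $x_0$ has all strictly positive coordinates; for boundary points of $\mathbb{R}_+^n$ the scaling $\lambda x_0$ leaves the zero coordinates unchanged. The cleanest way to close this gap is either to read the hypothesis as implying strict monotonicity of the one-dimensional restriction $\lambda\mapsto g(\lambda x_0)$ along each ray from the origin (which, together with continuity and the sup condition, produces the required sign change across $L$), or to restrict the continuity claim to the open positive orthant. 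Under either convention the bracketing argument is what drives the proof.
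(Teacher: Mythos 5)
Your proof is correct in substance but takes a genuinely different route from the paper's. The paper treats $\phi(\lambda,x)=g(\lambda x)$ as an implicit equation in $\lambda$, asserts local monotonicity of $\phi$ in $\lambda$ near the root, and then invokes the implicit function theorem for merely continuous functions (citing Jittorntrum and Kumagai) to conclude that $\lambda(x)$ is continuous on $\mathbb{R}^n_+\setminus\{0\}$; continuity of $p_C$ at the origin is handled, as in your argument, by the boundedness of the factor $\min\{1,\lambda(x)\}$. You replace the citation of a nonsmooth implicit function theorem with a self-contained $\varepsilon$-bracketing argument: choose $\lambda^-<\lambda(x_0)<\lambda^+$ with $g(\lambda^- x_0)<0<g(\lambda^+ x_0)$, propagate the sign conditions to a neighbourhood of $x_0$ by continuity of $g$, and trap the root by the intermediate value theorem plus strict monotonicity along the ray. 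This buys transparency and elementarity at the cost of a few more lines; the paper's version is shorter but delegates exactly the same monotonicity requirement to the hypotheses of the cited theorem. Notably, the gap you flag --- that the stated coordinatewise strict monotonicity of $g$ only yields strict monotonicity of $\lambda\mapsto g(\lambda x)$ when all coordinates of $x$ are positive, so boundary points of $\mathbb{R}^n_+$ are not literally covered --- is present in the paper's proof as well (it is hidden in the unproved assertion that $\phi(\lambda,x)$ is ``monotonic in the first variable''); your explicit acknowledgement of it, together with the proposed repairs (reading the hypothesis as monotonicity along rays, or restricting to the open orthant), is a point in your favour rather than a defect. Note also that Examples \ref{Example_2} and \ref{Example_3} in the paper sidestep the issue by exhibiting $\lambda(x)$ in closed form.
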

\begin{proof}
Let us consider a parametric mapping $y(\lambda,x)=\lambda(x)$, $\lambda\ge 0$,
function $\phi(\lambda,x)=g\left(y(\lambda,x) \right)$ and equation $\phi(\lambda,x)=0$. 
For each fixed $x^\prime\ne 0$ this equation has a unique solution 
$\lambda^\prime=\lambda(x^\prime)>0$, $\phi(\lambda(x^\prime),x^\prime)=0$. 
Locally, around point $(\lambda^\prime,x^\prime)\in\mathbb{R}^{n+1}_+$ function $\phi(\lambda,x)$ is
monotonic in the first variable under fixed the second one. By the implicit function
theorem \cite{Jittorntrum_1978, Kumagai_1980} the implicit function $\lambda(x)$ 
is continuous around 
$x^\prime$, i.e., it is continuous everywhere in $\mathbb{R}^n_+$ except $x=0$.
Then 
$$
p_C(x)=\left\{
\begin{array}{ll}
\min\{1,\lambda(x)\}x,&x\ne 0,\\
0,& x=0,
\end{array}
\right.
$$ 
is continuous in $\mathbb{R}^n_+$. 
\end{proof}

Consider the unconstrained optimization problem: 
\begin{equation}
\label{Unconstrained_probl_3}
F(x):=f({{p }_{C}}(x))+M\,{{r}_{C}}(x)\to {{\min }_{x\in 
{{\mathbb{R}}^{n}}}},	\;\;\;	M>0.
\end{equation}

The next theorem validates reduction of constrained problem (\ref{primary_problem_1}) to unconstrained one
(\ref{Unconstrained_probl_3}).
\begin{theorem}
%\label{Theorem_3}
\label{non-euclid_pen}
(An exact penalty function with a non-Euclidian projection). Let $f$ be some function on a non-empty closed set $C$. Then problems (\ref{primary_problem_1}) and (\ref{Unconstrained_probl_3}) are globally equivalent, i.e., if there is a global minimum of one problem, then  it is a global minimum of the other and objective function values at these minimums coincide. Moreover, any local minimum of problem (\ref{Unconstrained_probl_3}), if exists, is a local minimum of problem (\ref{primary_problem_1}). In the case when the constraint set $C$
satisfies conditions of Lemmas \ref{contin_centr_proj}, \ref{Lemma_4}, any local minimum of problem (\ref{primary_problem_1}), if exists, is a local minimum of problem (\ref{Unconstrained_probl_3}).
\end{theorem}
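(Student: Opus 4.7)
The plan is to mirror the proof of Theorem~\ref{Theorem_1}, but with the centralized projection $p_C$ in place of the Euclidean $\pi_C$. Because $p_C(x)$ is single-valued and lives on the line through $x_0$ and $x$, the role played by the geometric Lemma~\ref{Geometric_lemma_1} is taken over by a simpler fact that I would establish first: for $x\notin C$ and $x_\lambda:=(1-\lambda)x+\lambda p_C(x)$, $\lambda\in[0,1]$, one has $p_C(x_\lambda)=p_C(x)$ and $r_C(x_\lambda)=(1-\lambda)r_C(x)$. To see this I would parametrize the segment $[x_0,x]$ by distance from $x_0$; then $x_\lambda$ lies in $[p_C(x),x]\subset[x_0,x]$, so $[x_0,x_\lambda]\subseteq[x_0,x]$, and since $p_C(x)$ is the point of $C\cap[x_0,x]$ nearest to $x$ (equivalently, farthest from $x_0$), no point of $C$ can lie strictly between $p_C(x)$ and $x$ on this line. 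Hence $C\cap[x_0,x_\lambda]=C\cap[x_0,p_C(x)]$, whose nearest point to $x_\lambda$ is $p_C(x)$ itself, which gives the identity
\begin{equation*}
F(x_\lambda)=f(p_C(x))+M(1-\lambda)r_C(x)=F(x)-M\lambda r_C(x),
\end{equation*}
strictly less than $F(x)$ whenever $x\notin C$ and $\lambda>0$.

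With this identity in hand, three of the four required implications are immediate. If $x^\ast\in C$ is a global minimum of (\ref{primary_problem_1}), then for every $x$ we have $F(x)\ge f(p_C(x))\ge f(x^\ast)=F(x^\ast)$ because $p_C(x)\in C$, so $x^\ast$ globally minimizes (\ref{Unconstrained_probl_3}) with the same optimal value. Conversely, if $x^{\ast\ast}$ is any (local or global) minimum of (\ref{Unconstrained_probl_3}) and one supposes $x^{\ast\ast}\notin C$, the identity above produces $F(x_\lambda)<F(x^{\ast\ast})$ for arbitrarily small $\lambda>0$, contradicting minimality. Hence $x^{\ast\ast}\in C$, where $F$ coincides with $f$, so the minimality transfers to (\ref{primary_problem_1}) and the objective values coincide.

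The one direction that truly uses continuity of $p_C$ is: a local minimum $x^\ast$ of (\ref{primary_problem_1}) is a local minimum of (\ref{Unconstrained_probl_3}). Here I would invoke Lemma~\ref{contin_centr_proj} or Lemma~\ref{Lemma_4} to ensure that $p_C$ is continuous at $x^\ast$, with $p_C(x^\ast)=x^\ast$. Given a neighborhood $V_1(x^\ast)$ on which $f(x)\ge f(x^\ast)$ for $x\in V_1(x^\ast)\cap C$, continuity yields a smaller neighborhood $v(x^\ast)$ with $p_C(x)\in V_1(x^\ast)\cap C$ for every $x\in v(x^\ast)$, and then
\begin{equation*}
F(x)=f(p_C(x))+Mr_C(x)\ge f(p_C(x))\ge f(x^\ast)=F(x^\ast).
\end{equation*}

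The main obstacle, and the only nontrivial geometric step, is the invariance $p_C(x_\lambda)=p_C(x)$: in the Euclidean case this requires the cosine-law argument of Lemma~\ref{Geometric_lemma_1}, whereas here the one-dimensional structure of the centralized projection reduces it to an ordering statement on the segment $[x_0,x]$, but one must still carefully use that $p_C(x)$ is \emph{the nearest} point of $C\cap[x_0,x]$ to $x$ in order to rule out $C$-points strictly between $p_C(x)$ and $x$. Everything else then adapts the templates already used in Theorems~\ref{Theorem_1} and~\ref{Theorem_4}.
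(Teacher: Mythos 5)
Your proposal is correct and follows essentially the same route as the paper: the same direct inequality for the global direction, the same segment argument $x_\lambda=(1-\lambda)x^{**}+\lambda p_C(x^{**})$ to force minimizers of (\ref{Unconstrained_probl_3}) into $C$, and the same use of continuity of $p_C$ (Lemmas \ref{contin_centr_proj}, \ref{Lemma_4}) for the remaining local direction. The only difference is to your credit: the paper uses the invariance $p_C(x_\lambda)=p_C(x^{**})$ silently in its chain of equalities, whereas you supply the explicit one-dimensional ordering argument on the segment $[x_0,x]$ that justifies it.
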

\begin{proof}
 If there is a global minimum ${x}^{*}\in C$ of problem (\ref{primary_problem_1}), then for any $x\in \mathbb{R}^{n}$ it holds
\begin{equation}
\label{eqn_4}
F(x)=f(p_{C}(x))+M\,{{r}_{C}}(x)=f\left( {{p }_{C}}(x) \right)\ge f({{x}^{*}})=F({{x}^{*}}).
\end{equation}

If there is a local (global) minimum ${{x}^{**}}$ of problem (\ref{Unconstrained_probl_3}), then for some neighborhood $V({{x}^{**}})\subset {{\mathbb{R}}^{n}}$ the point ${{x}^{**}}$ is the global minimum of the function $F$ on the set $V({{x}^{**}})$. Let us show that ${{x}^{**}}\in C$. Assume the contrary, ${{x}^{**}}\notin C$, then	
$$F({{x}^{**}})=f\left( {p}_{C}({x}^{**})\right)+M\left\| {x}^{**}-
{p}_{C}({x}^{**}) \right\|>f\left( {p}_{C}({x}^{**}) \right).$$
Denote ${x}_{\lambda}=(1-\lambda ){x}^{**}+\lambda {p}_{C}({x}^{**})$. Let us consider a function $\Phi (\lambda )=F\left( {{x}_{\lambda }} \right)$, $\lambda \in [0,1]$.  Obviously, 
\[\begin{array}{lcl}
   \Phi (\lambda )&=&F\left( {{x}_{\lambda }} \right)= 
	f(p_C(x_\lambda))+M\left\| x_\lambda-{p}_{{C}}(x_\lambda) \right\|\\
	&=&f(p_C(x^{**}))+M\left\| x_\lambda-{p }_{{C}}(x^{**}) \right\|\\
	&=&f({p }_{{C}}(x^{**}))+M\left\| (1-\lambda ){{x}^{**}}+\lambda {p}_{{C}}({{x}^{**}})-{p}_{{C}}(x^{**}) \right\|\\
	&=&f({p }_{{C}}(x^{**}))+M(1-\lambda )\left\| {{x}^{**}}-{p}_{{C}}(x^{**}) \right\|\\
	&=&F(x^{**})-\lambda M\left\| {{x}^{**}}-{p}_{{C}}(x^{**}) \right\|
	<F({{x}^{**}}),\,\,\,\,\,\,\,\,\,\,\lambda \in \left( 0,1 \right].  
\end{array}\]

For all sufficiently small $\lambda $, we have ${{x}_{\lambda }}\in V({{x}^{**}})$ and $F({{x}_{\lambda }})<F({{x}^{**}})$, i.e., we obtain a contradiction that ${{x}^{**}}$ is not a local minimum of the function $F$. In this way, ${{x}^{**}}\in C$. For all, $x\in V({{x}^{**}})\cap C$ it holds $f(x)=f({p}_{{C}}(x))=F(x)\ge F({{x}^{**}})=f({{x}^{**}})$, i.e., the point ${{x}^{**}}$ is also a local (global) minimum point for $f$ on $C$ and $F({{x}^{**}})=f({{x}^{**}})$.

If  ${{x}^{*}}$ is a local minimum of problem (\ref{primary_problem_1}), i.e., in some neighborhood $V({{x}^{*}})$ this point ${{x}^{*}}$ is a global minimum on the set $V({{x}^{*}})\cap C$. By assumption and due to Lemmas \ref{contin_centr_proj}, \ref{Lemma_4}, the mapping ${p }_{{C}}(x)$ is continuous. Therefore, there is a smaller neighborhood $W({{x}^{*}})\subseteq V({{x}^{*}})$ such that for any $x\in W({{x}^{*}})$ it holds ${{p}_{C}}(x)\in V({{x}^{*}})$. Hence, for any $x\in W({{x}^{*}})$,  inequality (\ref{eqn_4}) is true, which means that ${{x}^{*}}$ is a local minimum of problem (\ref{Unconstrained_probl_3}). The proof is complete.
\end{proof}

\begin{remark}
\label{Remark_6} In Theorem \ref{non-euclid_pen} we don't assume that the objective function
$f$ is lower semicontinuous, i.e., the theorem states that if one problems has a local (global) minimum, then the other one also has a local (global) minimum. If the objective function of problem (\ref{primary_problem_1}) is known to be lower semicontinuous, then under conditions of the theorem both problems
(\ref{primary_problem_1}) and (\ref{Unconstrained_probl_3})
have local (global) minimums. 
\end{remark}

\begin{example}
\label{Example_2} (Linear constraints with positive coefficients). Let 
$$C=\left\{ x\in \mathbb{R}_{+}^{n}:\,\,\sum\nolimits_{j=1}^{n}{{{a}_{ij}}{{x}_{j}}}\le {{b}_{i}},\,i=1,...,m \right\},$$ 
where all ${{a}_{ij}}\ge 0$, ${{b}_{i}}>0$, and ${{\max }_{1\le j\le n}}{{a}_{ij}}>0$. Then conditions of Lemma \ref{Lemma_4} are satisfied for $g(x)={{\max }_{1\le i\le m}}\sum\nolimits_{j=1}^{n}{\left( {{{a}_{ij}}}/{{{b}_{i}}}\; \right){{x}_{j}}}-1$.  Indeed, $g(0)=-1$,  ${{\lambda }_{\,x}}={{\left( \underset{1\le i\le m}{\mathop{\max }}\,\sum\nolimits_{i=1}^{n}{\left( {{{a}_{ij}}}/{{{b}_{i}}}\; \right){{x}_{j}}} \right)}^{-1}}$ is a continuous function at any $x\in \mathbb{R}_{+}^{n}$, $x\ne 0$. 
\end{example}

\begin{example}
\label{Example_3} (Nonconvex case). Let 
$$C=\left\{ x\in \mathbb{R}_{+}^{n}:\,\,\underset{1\le i\le m}{\mathop{\min }}\,\sum\nolimits_{j=1}^{n}{\left( {{{a}_{ij}}}/{{{b}_{i}}}\; \right){{x}_{j}}}\le 1 \right\},$$ 
where all ${{a}_{ij}}>0$, ${{b}_{i}}>0$. Then conditions of Lemma \ref{Lemma_4} are satisfied for $g(x)={{\min }_{1\le i\le m}}\sum\nolimits_{j=1}^{n}{\left( {{{a}_{ij}}}/{{{b}_{i}}}\; \right){{x}_{j}}}-1$.  Indeed, $g(0)=-1$,  ${{\lambda }_{\,x}}={{\left( \underset{1\le i\le m}{\mathop{\min }}\,\sum\nolimits_{i=1}^{n}{\left( {{{a}_{ij}}}/{{{b}_{i}}}\; \right){{x}_{j}}} \right)}^{-1}}$ is a continuous function at any $x\in \mathbb{R}_{+}^{n}$, $x\ne 0$. 
\end{example}

\begin{remark}
\label{Remark_8} In conditions of Lemmas \ref{contin_centr_proj}, \ref{Lemma_4} one can take the exact penalty function in the composite form 
\begin{equation}
\label{func_rem_8}
F(x)=f\left( {{p}_{C}}\left( {{\pi }_{\mathbb{R}_{+}^{n}}}(x) \right) \right)+{{M}}\left\| x-{{\pi }_{\mathbb{R}_{+}^{n}}}(x) \right\|+{{M}}\left\| x-{{p}_{C}}\left( {{\pi }_{\mathbb{R}_{+}^{n}}}(x) \right) \right\|,	{M}>0,
\end{equation}
where we first find the Euclidean projection ${{\pi }_{\mathbb{R}_{+}^{n}}}(x)$ of point $x$ on the set $\mathbb{R}_{+}^{n}$ and then find the non-Euclidean projection ${{p}_{C}}\left( {{\pi }_{\mathbb{R}_{+}^{n}}}(x) \right)$ of  ${{\pi }_{\mathbb{R}_{+}^{n}}}(x)$ on $C$. Both projection operators are continuous, so their composition is continuous too. Due to construction of the projection operators ${{\pi }_{\mathbb{R}_{+}^{n}}}$ and ${{p}_{C}}$,  there cannot exist stationary points outside the feasible set.  By Theorems \ref{Theorem_1}, 
\ref{non-euclid_pen}, function (\ref{func_rem_8}) is the exact penalty function for problem: $f(x)\to {{\min }_{\left\{ x\in \mathbb{R}_{+}^{n}:g(x)\le 0 \right\}}}$. 
\end{remark}

\begin{remark}
\label{Remark_9} In conditions of Lemma \ref{Lemma_4} the mapping ${{p}_{{{C}_{0}}}}(x)={{\lambda }_{x}}x$ projects point $x$ on the set ${{C}_{0}}=\left\{ x\in \mathbb{R}_{+}^{n}:\,g(x)=0 \right\}$; it is continuous for $x\ne 0$ and thus can be used for construction of the exact penalty function for equality-constrained problems, 
$$F(x)=f\left( {{p}_{{{C}_{0}}}}\left( {{\pi }_{\mathbb{R}_{+}^{n}}}(x) \right) \right)+{{M}_{1}}\left\| x-{{\pi }_{\mathbb{R}_{+}^{n}}}(x) \right\|+{{M}_{2}}\left\| x-{{p}_{{{C}_{0}}}}\left( {{\pi }_{\mathbb{R}_{+}^{n}}}(x) \right) \right\|,	{{M}_{1}},{{M}_{2}}>0$$.
\end{remark}

\begin{example}
\label{Block-wise}
(Block-wise optimization problem).
Consider the problem
\[
f(x)+\sum_{j=1}^{n}c_{j}(x)y_j\rightarrow \min_{x\in X,y\ge 0},
\]
\[
g_i(x)+\sum_{j=1}^{n}a_{ij}(x)y_j\le b_i,\;\;\;i=1,...,m.
\]
Suppose there is $x_0\in X$ such that $g_i(x_0)\le b_i$, $i=1,...,m,$ then it can be rewritten in the following form
\begin{equation}
\label{block_probl}
f(x)+h(x)\rightarrow\min_{x\in X:g(x)\le 0},
\end{equation}
where
\[
g(x)=\max_{1\le i\le m}(g_i(x)- b_i),
\]
\begin{equation}
\label{function_h}
h(x)=\min_{y\ge 0}\left\{\sum_{j=1}^{n}c_{j}(x)y_j
\mid\sum_{j=1}^{n}a_{ij}(x)y_j\le b_i-g_i(x), \;\;\;i=1,...,m. \right\}
\end{equation}

The constraint $g(x)\le 0$ cannot be removed from the problem (\ref{block_probl}) by just adding the penalty term proportional to $\max{\{0,g(x)\}}$ to the objective function 
$f(x)+h(x)$ since $h(x)$ does not exist under $g(x)>0$. 
However, this constraint can be removed by means of the projective penalty method.
If $g_i(x)$ are convex on the bounded convex set $X$ and $g(x_0)<0$ for some $x_0\in X$, 
then the composite projection operator $p_{\{x:g(x)\le 0\}}(\pi_X(x))=q(x)$ is continuous 
on $X$ and the original problem is equivalent to the following one:
\[
f(q(x))+h(q(x))+\|x-q(x)\|\rightarrow \min_{x\in X}.
\]

Remark that if functions $g_i(x)$ are linear on a box set $X$, then both projections
 $y=\pi_X(x)$ and $p_{\{x:g(x)\le 0\}}(y)$ can be found in a closed form. For example,
let $X=\mathbb{R}_+^K$, $g_i(x)=\sum_{j=1}^k \alpha_{ij}x_j$, and $\alpha_{ij}>0$, $b_i>0$
for all $i,j$, 
then we can take $x_0=0$ and $q(x)=\min{\{1,\lambda(x)\}}x$, where 
$$\lambda(x)=\min_{1\le i\le m}\left(b_i/\sum_{j=1}^k\alpha_{ij}x_j\right).$$
\end{example}	

\section{The exact projective penalty function for stochastic programming problems}
\label{stoch}
A typical stochastic programming problem has the following form:
\begin{equation}
\label{stoch_progr}
f(x)=\mathbb{E}\phi(x,\omega)\longrightarrow\min_{x\in C},
\end{equation}
where $C\subseteq \mathbb{R}^n$ is a closed convex set, $\omega$ is an elementary event of some probability space $\left(\Omega,\Sigma,P\right)$, $\mathbb{E}$ is the the sign of mathematical expectation over probability measure $P$. The exact projective penalty function
for problem (\ref{stoch_progr}) has the form:
\begin{equation}
\label{pen_stoch_progr}
F(x):=\mathbb{E}\phi(\pi_C(x),\omega)+M d_C(x)=\mathbb{E}\bar{\phi}(x,\omega),
\end{equation}  
where $\bar{\phi}(x,\omega)=\phi(\pi_C(x),\omega)+M d_C(x)$. If function 
$\phi(\cdot,\omega)$ is Lipschitz continuous in the first variable on the closed convex set $C$ with integrable Lipschits constant $l(\omega)$, then, by Lemma \ref{Lemma_2a}, function
$\bar{\phi}(\cdot,\omega)$ is Lipschits continuous with constant 
$\bar{l}(\omega)=l(\omega)+2M$ and the penalty function (\ref{pen_stoch_progr}) 
is also Lipschitz continuous with constant $L=\mathbb{E}l(\omega)+2M$.

\section{Relation of the exact penalty function to the classical Kirszbraun problem}
\label{Kirszbraun}

Let $C\subset {{\mathbb{R}}^{n}}$ and $f:C\to {{\mathbb{R}}}$ be a Lipschitz function with constant $L$, i.e.
\[
	\mid f({{x}_{1}})-f({{x}_{2}}) \mid\le L\cdot d({{x}_{1}},{{x}_{2}}) \;\;\;\;\; \forall {{x}_{1}},{{x}_{2}}\in C,	\;\;d({{x}_{1}},{{x}_{2}})=\left\| {{x}_{1}}-{{x}_{2}} \right\|.
	\]
The Kirszbraun’s theorem \cite{Kirszbraun_1934} states that function $f$ can be extended to the whole space ${{\mathbb{R}}^{n}}$ with preserving the Lipschitz constant. One explicit formula for this extension is given in the next theorem. Further results and context can be found in \cite{Azagra_2021}.
\begin{theorem}
\label{McShane}
\cite{McShane_1934}. Let $C\subset {{\mathbb{R}}^{n}}$ and $f:C\to {\mathbb{R}}$ be a Lipschitz function with constant $L$. Then function
	$$\Phi (x):={{\inf }_{y\in C}}\left( f(y)+M\cdot d(x,y) \right)$$ 	
is Lipscitzian with constant $M$  and for $M\ge L$ it holds $f(x)=\Phi (x)$ for all $x\in C$.
\end{theorem}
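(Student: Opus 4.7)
The plan is to prove the two assertions of the theorem separately, both by elementary estimates using only the triangle inequality and the Lipschitz hypothesis on $f$.

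First I would establish the Lipschitz property of $\Phi$ with constant $M$. Fix arbitrary $x_1,x_2\in\mathbb{R}^n$. For every $y\in C$ the triangle inequality gives $d(x_1,y)\le d(x_1,x_2)+d(x_2,y)$, so
\[
f(y)+M\,d(x_1,y)\le f(y)+M\,d(x_2,y)+M\,d(x_1,x_2).
\]
Taking the infimum over $y\in C$ on both sides yields $\Phi(x_1)\le\Phi(x_2)+M\,d(x_1,x_2)$; swapping the roles of $x_1,x_2$ produces the symmetric inequality, whence $\lvert\Phi(x_1)-\Phi(x_2)\rvert\le M\,d(x_1,x_2)$. (One should note that the infimum in the definition of $\Phi$ is finite, which follows by picking any fixed $y_0\in C$ and bounding $\Phi(x)$ below by $f(y_0)-L\,d(y_0,y)+M\,d(x,y)$ uniformly in $y$ when $M\ge L$; or, without the finiteness caveat, by simply reading the inequality in the extended reals where it remains valid.)

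Next I would show $\Phi(x)=f(x)$ on $C$ when $M\ge L$. The upper bound is immediate by taking $y=x$ in the defining infimum: $\Phi(x)\le f(x)+M\,d(x,x)=f(x)$. For the matching lower bound, let $x\in C$ and let $y\in C$ be arbitrary. The Lipschitz hypothesis on $f$ gives $f(x)-f(y)\le L\,d(x,y)\le M\,d(x,y)$, hence
\[
f(y)+M\,d(x,y)\ge f(x).
\]
Taking the infimum over $y\in C$ gives $\Phi(x)\ge f(x)$, and combined with the upper bound we conclude $\Phi(x)=f(x)$ for every $x\in C$.

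There is no real obstacle here; the only subtlety worth flagging is the need that the infimum defining $\Phi(x)$ be finite for the Lipschitz inequality to be interpretable in $\mathbb{R}$, and this is exactly what the hypothesis $M\ge L$ secures (an elementary computation with a fixed anchor point $y_0\in C$ shows $\Phi(x)\ge f(y_0)-L\,d(x,y_0)+M\,d(x,y_0)>-\infty$ and of course $\Phi(x)\le f(y_0)+M\,d(x,y_0)<+\infty$). Thus the whole proof reduces to two short applications of the triangle inequality, one for each direction.
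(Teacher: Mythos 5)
Your proof is correct and follows essentially the same route as the paper: the Lipschitz bound for $\Phi$ via the triangle inequality applied inside the infimum, the upper bound $\Phi(x)\le f(x)$ by taking $y=x$, and the lower bound from $f(y)\ge f(x)-L\,d(x,y)$ together with $M\ge L$. Your remark on the finiteness of the infimum is a small extra care point the paper omits, but it does not change the argument.
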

\begin{proof}
 Indeed,
	$$\mid \Phi ({{x}_{1}})-\Phi ({{x}_{2}}) \mid\le 
	M{{\sup }_{y\in C}}\mid d({{x}_{1}},y)-d({{x}_{2}},y) \mid\le M\left\| {{x}_{1}}-{{x}_{2}} \right\|.$$
For any $x\in C$, 
	$$\Phi (x)={{\inf }_{y\in C}}\left( f(y)+M\cdot d(x,y) \right)\le {{\inf }_{y\in C}}\left( f(x)+M\cdot d(x,y) \right)\le f(x).$$
For any $x\in C$ and $M\ge L$, 
\[
\begin{array}{lcl}
   \Phi (x)&=&{{\inf }_{y\in C}}\left( f(y)+M\cdot d(x,y) \right)\ge {{\inf }_{y\in C}}\left( f(x)-L\cdot d(x,y)+M\cdot d(x,y) \right) \\
	&=&f(x)+{{\inf }_{y\in C}}(M-L)d(x,y)=f(x).  
\end{array}
\]
\end{proof}

Let us remark other properties of $\Phi (x)$:

If $f(\cdot )$ is continuous on a compact set $C$, then $\Phi (x)$ is continuous on ${{\mathbb{R}}^{n}}$;

If $f(\cdot )$ is convex on a compact convex set $C$, then $\Phi (x)$ is convex on ${{\mathbb{R}}^{n}}$.

\begin{lemma}
\label{lemma_first}
If ${{x}^{*}}$ is a global minimize of an arbitrary function $f$ on $C$, then ${{x}^{*}}$ is a global minimizer of $\Phi $ on ${{\mathbb{R}}^{n}}$:
$$f({{x}^{*}})={{\inf }_{y\in C}}f(y) \mbox{  implies  } \Phi ({{x}^{*}})=f({{x}^{*}})={{\inf }_{y\in {{\mathbb{R}}^{n}}}}\Phi (y).$$
\end{lemma}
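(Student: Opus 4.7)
The plan is to establish both the value $\Phi(x^*)=f(x^*)$ and the global optimality of $x^*$ for $\Phi$ by two short applications of the infimum in the definition of $\Phi$. The argument uses nothing beyond the nonnegativity of the distance and the defining property of $x^*$; there is no real obstacle to overcome here, since the lemma is essentially a bookkeeping consequence of the McShane-type formula.

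First I would compute $\Phi(x^*)$ exactly. Since $x^*\in C$, plugging $y=x^*$ into the infimum gives $\Phi(x^*)\le f(x^*)+M\cdot d(x^*,x^*)=f(x^*)$. For the reverse inequality, I use that $x^*$ is a global minimizer of $f$ on $C$: for every $y\in C$ we have $f(y)\ge f(x^*)$ and $M\cdot d(x^*,y)\ge 0$, so $f(y)+M\cdot d(x^*,y)\ge f(x^*)$; taking the infimum over $y\in C$ yields $\Phi(x^*)\ge f(x^*)$. Combining, $\Phi(x^*)=f(x^*)$.

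Next I would show global optimality of $x^*$ for $\Phi$ on $\mathbb{R}^n$. For any $z\in\mathbb{R}^n$ and any $y\in C$, again $f(y)+M\cdot d(z,y)\ge f(y)\ge f(x^*)$, so taking the infimum over $y\in C$ gives $\Phi(z)\ge f(x^*)=\Phi(x^*)$. This proves $x^*\in\arg\min_{\mathbb{R}^n}\Phi$ with value $f(x^*)$, completing the lemma. Note that this argument does not require $f$ to be Lipschitz or even continuous; it only uses that $x^*$ is a minimizer of $f$ on $C$ and that the penalty term $M\cdot d(\cdot,\cdot)$ is nonnegative, which is why the statement is phrased for an arbitrary function $f$.
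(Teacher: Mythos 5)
Your proposal is correct and follows essentially the same route as the paper: the upper bound $\Phi(x^*)\le f(x^*)$ comes from plugging $y=x^*$ into the infimum, and the lower bound $\Phi(z)\ge f(x^*)$ for all $z\in\mathbb{R}^n$ comes from $f(y)+M\cdot d(z,y)\ge f(x^*)$ together with nonnegativity of the distance. The only cosmetic difference is the order of the two steps, and your closing observation that neither Lipschitz continuity nor any regularity of $f$ is needed matches the paper's phrasing ``an arbitrary function $f$.''
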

\begin{proof} 
Let ${{x}^{*}}$ be a global minimize of $f$ on $C$, i.e., $f(y)\ge f({{x}^{*}})$ for all $y\in C$. Then for all $y\in C$and $x\in {{\mathbb{R}}^{n}}$
$$f(y)+M\cdot d(x,y)\ge f({{x}^{*}})+M\cdot d(x,y).$$
From here for all  $x\in {{\mathbb{R}}^{n}}$, it holds
\[
\begin{array}{lcl}
   \Phi (x)&=&{{\inf }_{y\in C}}\left( f(y)+M\cdot d(x,y) \right)\ge {{\inf }_{y\in C}}\left( f({{x}^{*}})+M\cdot d(x,y) \right)  \\ 
 &=&f({{x}^{*}})+{{\inf }_{y\in C}}M\cdot d(x,y)\ge f({{x}^{*}}). 
\end{array}
\]
In particular, $\Phi ({{x}^{*}})\ge f({{x}^{*}})$. Besides,
	$$\Phi ({{x}^{*}})={{\inf }_{y\in C}}\left( f(y)+M\cdot d({{x}^{*}},y) \right)\le \left( f({{x}^{*}})+M\cdot d({{x}^{*}},{{x}^{*}}) \right)=f({{x}^{*}}).$$
Hence, $\Phi (x^{*})=f({x}^{*})=\inf_{y\in \mathbb{R}^{n}}\Phi (y)$. 
\end{proof}

\begin{lemma}
\label{lemma_second}
Suppose $C$ is closed. If ${x}'$ is a global minimizer of $\Phi (x)$ on ${{\mathbb{R}}^{n}}$, then ${x}'\in C$.
\end{lemma}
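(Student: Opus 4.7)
The plan is to argue by contradiction: assume $x'\notin C$ and construct a point of $C$ at which $\Phi$ takes a strictly smaller value than $\Phi(x')$. Since $C$ is closed, the assumption $x'\notin C$ gives $\delta := d(x',C) > 0$; this strictly positive gap will furnish the quantitative slack needed to beat $\Phi(x')$.

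The core step exploits the infimum defining $\Phi$. Because $x'$ is a global minimizer, $\Phi(x')$ is finite, so for each $\varepsilon > 0$ I can pick $y_{\varepsilon}\in C$ with $f(y_{\varepsilon}) + M\,d(x',y_{\varepsilon}) \le \Phi(x') + \varepsilon$. Evaluating $\Phi$ at the point $y_{\varepsilon}\in C$ itself, and using that $y=y_{\varepsilon}$ is a competitor in the infimum defining $\Phi(y_{\varepsilon})$, I obtain $\Phi(y_{\varepsilon}) \le f(y_{\varepsilon}) + M\,d(y_{\varepsilon},y_{\varepsilon}) = f(y_{\varepsilon})$. Combining the two bounds gives
$$\Phi(y_{\varepsilon}) \le \Phi(x') + \varepsilon - M\,d(x',y_{\varepsilon}) \le \Phi(x') + \varepsilon - M\delta,$$
where the final inequality uses $y_{\varepsilon}\in C$ together with $d(x',C)=\delta$. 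Choosing any $\varepsilon < M\delta$ then yields $\Phi(y_{\varepsilon}) < \Phi(x')$, contradicting the global minimality of $x'$. Hence $x'\in C$.

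The only delicate point, and what I would flag as the main conceptual obstacle, is that in the stated generality the infimum defining $\Phi(x')$ need not be attained by any particular $y\in C$: the lemma assumes neither compactness of $C$ nor lower semicontinuity of $f$. The $\varepsilon$-infimum argument above sidesteps this neatly, so the step really is routine once the right $y_{\varepsilon}$ is chosen. Closedness of $C$ is used only once, precisely to guarantee $\delta > 0$; drop it and the contradiction collapses, which also explains why the hypothesis is stated explicitly in the lemma. Positivity of $M$ is used in the same final comparison.
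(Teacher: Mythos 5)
Your proof is correct and follows essentially the same route as the paper's: closedness gives a positive distance gap $d(x',C)>0$, an $\varepsilon$-approximate minimizer $y_\varepsilon\in C$ of the infimum defining $\Phi(x')$ satisfies $\Phi(y_\varepsilon)\le f(y_\varepsilon)$, and combining the two bounds beats $\Phi(x')$ once $\varepsilon<M\,d(x',C)$. The only difference is cosmetic (your $\delta$ and $\varepsilon$ play the roles of the paper's $\varepsilon$ and $\delta$, respectively).
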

\begin{proof}
Suppose the opposite, ${x}'\notin C$. By closedness of $C$, there is $\varepsilon >0$ such that $d({x}',y)\ge \varepsilon >0$ for all $y\in C$. Let us take $\delta <M\cdot \varepsilon $ and find ${y}'\in C$ such that  
$$\Phi ({x}')={{\inf }_{y\in C}}\left( f(y)+M\cdot d({x}',y) \right)\ge f({y}')+M\cdot d({x}',{y}')-\delta.$$
From here,
\[
	\begin{array}{lcl}
   \Phi ({y}')&=&{{\inf }_{y\in C}}\left( f(y)+M\cdot d({y}',y) \right)\le f({y}')+M\cdot d({y}',{y}') \\ 
 &=&f({y}')\le \Phi ({x}')-M\cdot d({x}',{y}')+\delta \le \Phi ({x}')-M\cdot \varepsilon +\delta <\Phi ({x}'),  
\end{array} 
\]
a contradiction with global optimality of ${x}'$. 
\end{proof}

\begin{theorem}
\label{Kirsbraun_glob_opt}
Let $f$ be Lipschitzian on a closed set $C\subset {{\mathbb{R}}^{n}}$ with constant $L$, $M\ge L$. Then global minimums of $f(x)$ on $C$ coincide with global minimums of  the Kirszbraun function $\Phi (x)={{\inf }_{y\in C}}\left( f(y)+M\cdot d(x,y) \right)$ on ${{\mathbb{R}}^{n}}$. 
\end{theorem}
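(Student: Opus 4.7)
The plan is to combine the three preceding results (Theorem \ref{McShane} and Lemmas \ref{lemma_first}, \ref{lemma_second}) into the two-way correspondence the theorem claims. Since each direction is essentially a short deduction from one of these results, I expect no real obstacle beyond making sure the conditions ($M\ge L$, $C$ closed) are invoked in the right place.

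First I would handle the easy direction: let $x^*\in C$ be a global minimizer of $f$ on $C$. Lemma \ref{lemma_first} directly gives $\Phi(x^*)=f(x^*)=\inf_{y\in\mathbb{R}^n}\Phi(y)$, so $x^*$ is a global minimizer of $\Phi$ on $\mathbb{R}^n$ and the optimal values coincide. Note that this direction does not need $M\ge L$; the Lipschitz hypothesis enters only through the other direction, via Theorem \ref{McShane}.

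For the converse, let $x'$ be a global minimizer of $\Phi$ on $\mathbb{R}^n$. By Lemma \ref{lemma_second} (which uses closedness of $C$), $x'\in C$. Since $M\ge L$, Theorem \ref{McShane} yields $\Phi(x')=f(x')$ and $\Phi(y)=f(y)$ for every $y\in C$. Then for any $y\in C$ we have $f(y)=\Phi(y)\ge\Phi(x')=f(x')$, so $x'$ is a global minimizer of $f$ on $C$, with the same optimal value.

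Putting the two directions together gives the claimed equivalence. The only subtle point worth emphasising in the write-up is that without $M\ge L$ the identity $\Phi|_C=f$ can fail, and then a minimizer of $\Phi$ in $C$ need not minimize $f$; so the role of $M\ge L$ is precisely to force $\Phi$ to agree with $f$ on $C$ and transport the minimality back. No additional machinery is required.
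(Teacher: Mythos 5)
Your proposal is correct and follows essentially the same route as the paper's own proof: Lemma \ref{lemma_first} for the forward direction, then Lemma \ref{lemma_second} to place the minimizer of $\Phi$ inside $C$ and Theorem \ref{McShane} (using $M\ge L$) to identify $\Phi$ with $f$ on $C$ for the converse. Your added remarks on where $M\ge L$ and the closedness of $C$ enter are accurate but do not change the argument.
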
 
	
	Proof. By Lemma \ref{lemma_first} each global minimum of  $f(x)$ on $C$ is a global minimizer for $\Phi (x)$ on ${{\mathbb{R}}^{n}}$. Conversely, let ${x}'$ be a global minimum of  $\Phi (x)$ on ${{\mathbb{R}}^{n}}$. By Lemma \ref{lemma_second}, ${x}'\in C$. By Theorem \ref{McShane}, $\Phi (x)=f(x)$ on $C$. So ${x}'$ is a global minimum of $f(x)$ on $C$. $\Box$
	
\begin{remark}
\label{Remark_10} The proposed exact projective penalty function (3) extends function $f(x)$ on $C\subset {{\mathbb{R}}^{n}}$  to $F(x)$ on $\mathbb{R}^n$ such that $f(x)=F(x)$ on $C$:
	$$F(x):=f({{\pi }_{C}}(x))+M\cdot d(x,{{\pi }_{C}}(x)),	\;\;M>0,$$
where  ${{\pi }_{C}}(x)=\arg {{\min }_{y\in C}}d(x,y)$. 

	Function $F(x)$ has the following properties:
	
	$F(x)=f({{\pi }_{C}}(x))+M\cdot d(x,{{\pi }_{C}}(x))\ge {{\inf }_{y\in C}}\left( f(y)+M\cdot d(x,y) \right)=\Phi (x)$  for $x\in {{\mathbb{R}}^{n}}$;
	
	$F(x)=f(x)$ for $x\in C$;
	
	If $f(x)$ is Lipschitzian with constant $L$ on a closed convex set $C$, then $F(x)$ is Lipscitzian on ${{\mathbb{R}}^{n}}$ with constant $\left( L+2M \right)$;
	
	Global and local minimums of $f(x)$ on $C$ are global and local minimums of $F(x)$ on ${{\mathbb{R}}^{n}}$ and conversely,  global and local minimums of $F(x)$ on ${{\mathbb{R}}^{n}}$ are global and local minimums of $f(x)$ on $C$.
\end{remark}

\begin{remark}
\label{Remark_11} In many cases it is much easier to calculate $F(x)$ than $\Phi (x)$ since calculation of $F(x)$ is reduced to finding the projection ${{\pi }_{C}}(x)$.
%\end{remark}

%\begin{remark}
%\label{Remark_12} 
In the optimization context, both functions $F(x)$ and $\Phi (x)$can be used for replacement of the constrained optimization problem ${{\min }_{x\in C}}f(x)$ by the unconstrained ones,
${{\min }_{x\in {{\mathbb{R}}^{n}}}}F(x)$ and ${{\min }_{x\in {{\mathbb{R}}^{n}}}}\Phi (x)$. The first reformulation is advantageous, because it preserves local and global minimums of the original problem. The other reformulation preserves with guarantee only global minimums but for convex problems it also preserves convexity.
\end{remark}

\section{Numerical experiments}
\label{numerical}
In \cite{Norkin_2020}, the exact projective penalty method (\ref{unconstrained_problem_3}) was tested on
a number of test examples under box constraints, and in \cite{Galvan_2021} it was tested on small
optimization problems with linear constraints. Below we illustrate the performance of the method with non-Euclidean projection (\ref{non-euclid_proj}) on test problems with nonlinear and large linear  constraints. Since the resulting problem (\ref{Unconstrained_probl_3}) may be nonconvex, for its solution we use a version of the branch and bound (B\&B) method
combined  with sequential quadratic programming algorithm (under box constraints) implemented in the Matlab optimization toolbox. 
.

The structure of the {\it procedure} for solving problem (\ref{primary_problem_1}) is as follows.

{\it First}, we extract from $C$ (or add) a box constraint 
$$X=\{x=(x_1,...,x_n)^T\in \mathbb{R}^n: a_i\le x_i\le b_i, i=1,...,n \},$$ 
so the problem becomes
\begin{equation}
\label{primary_problem_2}
f(x)\to \underset{x\in C\cap X}{\mathop{\min }}.
\end{equation}

{\it Second}, we find a point $x^0\in X$ that lies in the interior of $C$. 
If $C=\{x\in\mathbb{R}^n:g(x)\le 0\}$, then for this we (approximately) solve the problem:
\begin{equation}
\label{feasibility_test}
g(x)\rightarrow \min_{x\in X}.
\end{equation}

{\it Third}, we form the exact penalty problem
\begin{equation}
\label{exact_penalty_problem_2}
F(x):=f(p_C(\pi_X(x)))+\|p_C(\pi_X(x))-\pi_X(x)\|+\|x-\pi_X(x)\|\rightarrow \min_{x\in X}.
\end{equation}

{\it Forth}, to solve problem (\ref{exact_penalty_problem_2}) (and (\ref{feasibility_test})), we apply the following branch and bound algorithm.

{\it The Branch \& Bound algorithm.}

{\it Initialization.} Set initial partition of ${\cal P}_0=\{X\}$, select a random starting point $\tilde{x}^0\in X$ and apply some local minimization algorithm $A$ to problem (\ref{exact_penalty_problem_2}).
As result we find a better point $\bar{x}^0\in X$ such that $F(\bar{x}^0)<F(\tilde{x}^0)$.
Set B\&B iteration number $k=0$. Set tollerances $\epsilon>0$ and $\delta>0$.

{\it B\&B iteration.} Suppose at some iteration $k$ we have partition ${\cal P}_k=\{X_i, i=1,...,N_k\}$ of the set  
$X=\cup_{i=1}^{N_k}X_i$ consisting of smaller boxes $X_i$. For each $X_i$ there is known
a feasible point $\bar{x}^i\in X_i$ and the value $F(\bar{x}^i)$, 
$V_k=\min_{1\le i\le N_k}F(\bar{x}^i)$. Set 
${\cal P}_{k+1}=\emptyset$.

For each such set $X_i\in {\cal P}_k$
choose  a random starting point $\tilde{x}^i$ and apply some local minimization algorithm $A$ to problem $\min_{x\in X_i}F(x)$ to find a better point $\bar{\bar{x}}^i\in X_i$,
$F(\bar{\bar{x}}^i)<F(\tilde{x}^i)$. 

If values $F(\bar{x}^i)$ and $F(\bar{\bar{x}}^i)$
are sufficiently different, say $\mid F(\bar{x}^i)-F(\bar{\bar{x}}^i)\mid\ge\epsilon$, or
points $\bar{\bar{x}}^i$ and $\bar{x}^i$ are sufficiently different, 
$\|\bar{\bar{x}}^i-\bar{x}^i\|\ge\delta$, we subdivide the box 
$X_i=X^\prime_i\cup X^{\prime\prime}_i$ into two subboxes
$X^\prime_i$ and $X^{\prime\prime}_i$ in such a way that 
$\bar{x}^i\in X^\prime_i$ and $\bar{\bar{x}}^i\in X^{\prime\prime}_i$. 
In this case partition ${\cal P}_{k+1}$ is updated by adding
successors $X^\prime_i$ and $X^{\prime\prime}_i$, i.e. 
${\cal P}_{k+1}:={\cal P}_{k+1}\cup X^\prime_i\cup X^{\prime\prime}_i$.

Otherwise, i.e. if values $F(\bar{x}^i)$ and $F(\bar{\bar{x}}^i)$ and points
$\bar{\bar{x}}^i$ and $\bar{x}^i$ are close, the set $X_i\ni \bar{x}^i$ goes unchanged 
to the updated partition ${\cal P}_{k+1}:={\cal P}_{k+1}\cup X_i$.

When all elements $X_i\in {\cal P}_k$ are checked, i.e. the new partition 
${\cal P}_{k+1}$ with elements $X_i, i=1,...,N_{k+1}$ and points $\bar{x}_i\in X_i$
has been constructed, we calculate the record achieved value 
$V_{k+1}=\min_{1\le i\le N_{k+1}} F(\bar{x}^i)$.

{\it Check for stop.} If the progress of the B\&B method becomes small, e.g. $V_k-V_{k+1}$ 
(or $V_{k-1}-V_{k+1}$, etc.) is sufficiently small,
then STOP, otherwise set $k:=k+1$ and continue branching and bounding iterations.  

\begin{remark}
Objective function values $F(\bar{x}^i)$ give upper bounds for the optimal values
$F^*_i=\min_{x\in X_i}F(x)$. If there are known a lower bounds $L_i\le F^*_i$, then the 
subsets $X_i\in{\cal P}_k$ such that $L_i\ge V_k$ can be safely ignored, i.e.
excluded from the current partion ${\cal P}_k$. 
Heuristically, if some set $X_i$ remains unchanged during several B\&B iterations,
it can be ignored in the future iterations.
\end{remark} 

\begin{remark}
The objective function in (\ref{exact_penalty_problem_2}) is nonsmooth, 
so for its minimization
it is advisable to apply methods of nondifferentiable optimization.
However for its local optimization, one may use within the B\&B framework any 
(even possibly not converging)
algorithms, which are able to improve the initial objective function value.
Indeed, experiments show that well developed smooth optimization algorithms 
like sequential quadratic programming ones
are also applicable for this purpose.
\end{remark} 
\begin{table}
\label{table1}
\caption{Testing the method (with non-Euclidean projection) on non-linear constrained problems from \cite{Hock_Schittkowski_1981}.
The columns of the table have the following meaning:
1) Test number from \cite{Hock_Schittkowski_1981};
2) Type of nonlinear constraints (inequalities or equalities);
3) Number of variables and nonlinear constraints;
4) Reference minimum value from \cite{Hock_Schittkowski_1981};
5) Achieved value by the exact penalty method;
6) Accuracy of the constraint satisfaction;
7) Number of penalty function calculation;
8) Number of constraint function calculation. 
}
\begin{tabular}{rrrrrrrr}
\hline
1	    &2	  &3	     &4	         &5	          &6	        &7	    &8	 \\
Test  &con	&var(con)&F\_ref	   &F\_min	    &G\_accur	  &F\_nmb	&G\_nmb \\
\hline
HS18	&ineq	&2(2)	   &5.0000e+0	 &5.0000e+0	  &3.7983e-8	&1180.2	&41.4 \\
HS20	&ineq	&2(3)	   &3.8199e+1  &3.8199e+1	  &2.0823e-6	&1015.4	&84.3 \\
HS34	&ineq	&3(2)	   &-8.3403e-1 &-8.1394e-1	&5.7009e-5	&60976	&614.3 \\
HS39	&eq		&4(2)    &-1.0000e+0 &-9.1689e-1	&2.4936e-8	&101450	&1392.2 \\
HS66	&ineq	&3(2)	   &5.1816e-1  &5.2572e-1	  &1.4229e-4	&74906	&541.5 \\
HS77	&eq		&5(2)	   &2.4151e-1  &3.8722e-1	  &1.2861e-7	&147830	&1902.7 \\
HS110	&box	&10(0)   &-4.5778e+1 &-4.5778e+1	&1.4901e-8	&2844.7	&44 \\
HS118	&ineq	&15(29)  &6.6482e+2  &6.6522e+2	  &5.7099e-4	&337930	&491170 \\
\hline
\end{tabular}
\end{table}
\noindent
\begin{table}
\label{table2}
\caption{Testing the method (with non-Euclidean projection) on randomly generated large linearly constrained problems, see Example \ref{Block-wise}.}
\begin{tabular}{rrrrrrrr}
\hline
1	    &2	      &3	     &4	             &5	          &6	        \\
%\hline
Test  &Con.type &Var(con) &Func.accur.   &Constr.accur.&Func.calc.\\
\hline
1	    &lin-ineq	&10(20)	 &7.7766e-07	   &2.8790e-08	&1.2357e+03	\\
2   	&lin-ineq	&10(50)	 &5.6041e-07     &2.6211e-08	&9.3530e+02 \\
3   	&lin-ineq	&50(50)	 &2.9869e-06     &6.1793e-08	&6.3220e+03 \\
4   	&lin-ineq	&50(100) &1.3239e-06     &1.4488e-07	&6.1552e+03 \\
5   	&lin-ineq	&100(100)&8.1691e-06     &8.1228e-07  &1.4868e+04	\\
6   	&lin-ineq	&200(100)&3.0028e-06     &1.5685e-08	&2.3766e+04 \\
7   	&lin-ineq	&500(100)&5.1253e-06     &7.5100e-07	&1.2367e+05 \\
\hline
\end{tabular}
\end{table}
Table 7.1 presents some results concerning the performance of the exact projective penalty method on nonlinear constrained problems from 
\cite{Hock_Schittkowski_1981}. 
First an internal feasible point is found by solving (\ref{feasibility_test}), then it is used for calculation of non-Euclidean projections 
$p_C(\pi_X(x))$ in (\ref{exact_penalty_problem_2}). 
Equality constraints $g(x)=0$ are approximated by inequality ones $-\epsilon\le g(x)\le\epsilon$,
$\epsilon=10^{-10}$. For all tests the number of B\&B iterations is limited
to 10, i.e., the number of partitions is less or equal to 1024.
Each problems is solved ten times, the performance results are averaged.

Table 7.2 presents results of testing the method on randomly
generated problems as described in Example \ref{Block-wise}.
First, we randomly generate a linear programming problem with positive coefficients,
solve it, and obtain a reference optimal value.
Then we  subdivide variables into two groups. For fixed values of the first group variables, we solve
problem (\ref{function_h}) with respect the second group of variables.
As a result we obtain a nonsmooth convex optimization problem (\ref{block_probl}) with linear inequality constraints with respect to the first group variables. Column 3 of Table 7.2 indicates the number of variables and constraints
in the reduced problem. The latter problem is then solved by the exact projective 
penalty method with projections in the closed form as in Example
\ref{Block-wise}. The accuracy of the obtained solution and constraint 
satisfaction are indicated in columns 4, 5. The last column 6 shows
the number of the objective function calculations, i.e.
the number linear programming solver calls.
The results are averaged over ten runs for each test. 

We can make the following preliminary conclusions from the numerical experiments.

First, numerical experiments confirm theoretical results on 
the possibility of exact reduction of constrained optimization problems to 
unconstrained ones by the exact projective penalty method.

Second, the non-Euclidean projection as well as Euclidean one can be effectively 
used in the method.

Third, local optimization methods designed for solving smooth problems can be used
for solving nonsmooth problems in combination with the branch and bound method.

Forth, the exact projective penalty method can effectively solve
small nonlinear constrained problems and large linearly constrained ones.

Fifth, optimization problems with nonlinear equality constraint are hard
for the exact projective penalty method.

\section{Discussion and conclusions}
\label{conclusions}
In the paper, we equivalently reduce general constrained optimization problems to nonsmooth unconstrained ones without losing local and global minimums. In the proposed method, the original objective function is extended to infeasible area by summing its value at the projection of an infeasible point on the feasible set with the distance to the projection (or with other penalty terms). Nonconvex feasible sets with multivalued projections are admitted, and the objective function may be lower semicontinuous. So the method does not assume the existence of the objective function outside the allowable area and does not require the selection of the penalty coefficient. The special case of convex problems is included. However, the transformed problem may become nonconvex. So the preferable area of application of the method includes nonlinear problems with convex constraints, problems where the objective function is not defined outside the feasible set, and nonconvex global optimization problems with monotonic constraints. 

The discussed exact projective  penalty method (\ref{unconstrained_problem_3}) was first proposed conceptually in \cite{Norkin_2020} and later studied in \cite{Galvan_2021} for problems with convex constraints. In \cite{Norkin_2020, Norkin_2022}, a variant with non-Euclidean projection (\ref{non-euclid_proj}), (\ref{Unconstrained_probl_3}) was proposed too; also the method was numerically tested (in combination with the smoothing method \cite{Norkin_2020}) on a dozen of small (up to dimension 6) test functions from literature (under box constraints). The variant  (\ref{Unconstrained_probl_3}) with the non-Euclidean projection (\ref{non-euclid_proj}) is essential even in the convex case, since finding the Euclidean projection $\pi_C(\cdot)$  can be computationally costly relatively to the non-Euclidean one $p_C(\cdot)$. Besides, the non-Euclidean variant can be applied to some nonconvex problems with monotonic constraint functions as shown in Theorem \ref{non-euclid_pen}, Lemma \ref{Lemma_4}, Example \ref{Example_3}. Moreover, by Theorem \ref{non-euclid_pen}, to find some local minimums of a general nonconvex problem (\ref{primary_problem_1}) one may find some feasible point   and to solve nonconstrained problem (\ref{Unconstrained_probl_3}). 

In \cite{Galvan_2021},  method (\ref{unconstrained_problem_3})  (with the Euclidean projection on a convex constraint set) was theoretically validated and numerically tested on a large number of test problems of small dimensions. It was shown that Clarke's critical points of the transformed problem are critical for the original one. It was also shown that seemingly unnecessary penalty parameter $M$  in (\ref{unconstrained_problem_3}) may play an essential role in the speeding up the convergence and improving the accuracy of optimization procedures. 

Summarizing, in the present paper, we extended and validated the method for solving general constrained nonconvex optimization problems. 
Beside Euclidean projection operation, 
we included a non-Euclidean projection option to the construction of the method, 
and theoretically and numerically validated the modified method. The non-Euclidean projection
is reduced to finding a root of a one-dimensional nonlinear equation.
This suggestion is especially useful in case of large linear inequality constraints
where the projection can be found in a closed form.
Also, we showed applicability of the method for solving constrained stochastic optimization problems. We numerically demonstrated a possibility
to solve arising nonsmooth optimization problems by combination
of smooth optimization methods with the branch and bound technique. Further research may concern investigation of differential properties of projections ${{\pi }_{C}}(x)$, ${{p}_{C}}(x)$, ${{p}_{{{C}_{0}}}}(x)$,   and penalty functions in (\ref{unconstrained_problem_2}), (\ref{Unconstrained_probl_3}).
First step in this direction is made in Examples \ref{Example_2}, \ref{Example_3}
where projections are found in a closed analytical form. Numerical implementation of the projection exact penalty method for solving general nonconvex constrained global optimization problem based on the projection toward just feasible points (as in Theorem 4) will be the subject of further research.

\bigskip
{\bf Acknowledgment}. The work was supported by a stipend of Volkswagen Foundation (2022) and grant 2020.02/0121 of the National Research Foundation of Ukraine. The author thanks Professor Georg Pflug from Vienna University and Professor Alois Pichler from Chemnitz University of Technology for valuable discussions of the exact penalty function methods. In particular, Professor A.Pichler pointed out on the relation of exact penalty functions to the classical Kirszbraun problem. The author also thanks Vienna University and Chemnitz University of Technology for the hospitality during spring and summer 2022 and the possibility to complete this work.

\section*{Declarations}

The work was supported by stipend 9C090 of Volkswagen Foundation (2022) for fulfillment the project 
''Stochastic Global Optimization Methods for Solving Machine Learning Problems''
and by grant 2020.02/0121 of the National Research Foundation of Ukraine for performing the project 
''Analytical methods and machine learning in control theory and
decision-making in conditions of conflict and uncertainty.''

My manuscript has no associated data.

%%===========================================================================================%%
%% If you are submitting to one of the Nature Portfolio journals, using the eJP submission   %%
%% system, please include the references within the manuscript file itself. You may do this  %%
%% by copying the reference list from your .bbl file, paste it into the main manuscript .tex %%
%% file, and delete the associated \verb+\bibliography+ commands.                            %%
%%===========================================================================================%%

%\bibliography{sn-bibliography}% common bib file
%% if required, the content of .bbl file can be included here once bbl is generated
%%\input sn-article.bbl

%% Default %%
%%\input sn-sample-bib.tex%

\end{document}